\documentclass[10pt]{article}

\usepackage[T1]{fontenc}
\usepackage{lmodern}
\usepackage[a4paper,margin=2.35cm]{geometry}
\usepackage{amsmath,amssymb,amsthm,mathtools}
\usepackage{microtype}
\usepackage[hidelinks]{hyperref}

\AtBeginDocument{%
  \setlength{\abovedisplayskip}{8pt plus 2pt minus 3pt}%
  \setlength{\belowdisplayskip}{8pt plus 2pt minus 3pt}%
  \setlength{\abovedisplayshortskip}{0pt plus 2pt}%
  \setlength{\belowdisplayshortskip}{5pt plus 2pt minus 2pt}%
}

\newtheorem{theorem}{Theorem}[section]
\newtheorem{proposition}[theorem]{Proposition}
\newtheorem{lemma}[theorem]{Lemma}
\newtheorem{corollary}[theorem]{Corollary}
\theoremstyle{definition}
\newtheorem{definition}[theorem]{Definition}

\newcommand{\F}{\mathbb F}
\newcommand{\wt}{\operatorname{wt}_{\rm rk}}
\newcommand{\GL}{\operatorname{GL}}
\newcommand{\Rdef}{\operatorname{Rdef}}
\newcommand{\qbinom}[2]{\genfrac{[}{]}{0pt}{}{#1}{#2}_{q}}
\title{The extremal gap for scattered $q$-systems}
\author{Alessandro Giannoni\thanks{Dipartimento di Matematica e Applicazioni
``R. Caccioppoli'', Universit\`a degli Studi di Napoli Federico II,
Napoli, Italy. Email: \texttt{alessandro.giannoni@unina.it}.}}
\date{}

\hypersetup{
  pdftitle={The extremal gap for scattered q-systems},
  pdfauthor={Alessandro Giannoni}
}

\begin{document}

\maketitle

\begin{abstract}
Two general estimates govern the rank of an $h$-scattered
$q$-system: the scattered upper bound and the lower bound for
systems maximal under inclusion. We determine exactly when they
coincide. Besides $m=h+1$ and $m=h+2$, equality occurs precisely
when $h$ is even, $m=h+3$, and
$k\equiv h/2\pmod{h+1}$. We also prove rigidity in the first two
cases: every extremal system is equivalent to either $\F_q^k$ or a
direct sum of elementary Gabidulin systems and $\F_q$-directions.

For $h=m-3$ and $k=\ell(m-2)+s$, with
$1\leq s<(m-2)/2$, the two bounds differ by one, apart from one
boundary case. Starting from an arbitrary maximum scattered
$q$-system, we construct extremal quasi-maximum $(m-3)$-scattered
systems of rank $\ell m+s$. Every such extension preserves lower
bounds on the generalized rank weights of the initial code. When an
explicit independence condition holds, the construction also
produces a nondegenerate dual pair of quasi-MRD codes. We parametrize
these extensions by
subspaces of a quotient space and derive a uniform lower bound on the
number of inequivalent outputs. Known order-two families yield codes
separated from every nontrivial direct sum under an explicit
numerical condition, while higher-order families are separated from
a specified order-one direct-sum class. We conclude with a fully
explicit binary example.
\end{abstract}

\medskip
\noindent\textbf{Keywords.}
$q$-systems; $h$-scattered subspaces; graph extensions;
quasi-maximum scattered subspaces; quasi-MRD codes; generalized rank
weights.

\smallskip
\noindent\textbf{2020 Mathematics Subject Classification.}
51E20; 94B05; 94B27; 11T06.

\section{Introduction}

Let $F=\F_{q^m}$.  A rank-$n$ $q$-system in $F^k$ is an
$n$-dimensional $\F_q$-subspace spanning $F^k$ over $F$.  It is
$h$-scattered if
$$
 \dim_{\F_q}(U\cap H)\leq h
$$
for every $h$-dimensional $F$-subspace $H$.  Its rank satisfies
\begin{equation}\label{eq:intro-upper}
 \dim_{\F_q}(U)\leq
 \left\lfloor\frac{km}{h+1}\right\rfloor.
\end{equation}
Systems attaining \eqref{eq:intro-upper} are \emph{extremal}, and
\emph{quasi-maximum} when $h+1\nmid km$; the upper bound is
\cite[Theorem~2.3]{CMPZ}, and also follows from
\cite[Corollary~4.4]{MNT}.  Explicit quasi-maximum systems with
$h>1$ remain comparatively scarce.

The word \emph{maximally} has a different meaning: it refers to
maximality under inclusion.  Every maximally $h$-scattered system
satisfies the general lower bound
$$
 \dim_{\F_q}U\geq
 \left\lceil\frac{m(k-h)}{h+1}\right\rceil+h.
$$
This is \cite[Theorem~2.4]{BGMN}.
Thus there is an a priori interval between the rank guaranteed by
maximality and the extremal rank in \eqref{eq:intro-upper}.  A natural
first question is when this interval collapses.

Theorem~\ref{thm:bounds-equality} answers this question completely.
Besides the two uniform cases $m=h+1$ and $m=h+2$, the bounds
coincide only in the last near-diagonal case
$$
 h\text{ even},\qquad m=h+3,\qquad
 k\equiv\frac h2\pmod{h+1}.
$$
The two uniform cases might appear to require additional
constructions.  Proposition~\ref{prop:rigid-equality-cases} shows
instead that they are completely rigid.  If $m=h+1$, every extremal
system is equivalent to $\F_q^k$.  If $m=h+2$, every extremal system
is equivalent to a direct sum of elementary Gabidulin systems and
$\F_q$-directions.  Thus the familiar stabilization constructions
exhaust both cases up to equivalence.

The case $m=h+3$ is consequently the first one in which the equality
of the bounds may coexist with genuinely new equivalence types.  It
is also the last such regime, since no equality is possible when
$m\geq h+4$.  This singles out $h=m-3$ as the natural next case.

We work with the specific value $h=m-3$ and write
$$
 h=m-3,\qquad
 k=\ell(m-2)+s,
 \qquad n=\ell m+s.
$$
Corollary~\ref{cor:specialized-extremal-gap} specializes
Theorem~\ref{thm:bounds-equality} to these parameters.  If
$1\leq s<(m-2)/2$, the upper bound is $n$, whereas the general
maximality bound is
$$
 n-1,
$$
unless $m$ is odd and $s=(m-3)/2$, in which case it is already $n$.
Consequently, away from this boundary case, the general theory leaves
a one-dimensional extremal gap.  The motivating problem of the paper
is to attain the upper bound throughout this range by explicit
constructions, while retaining nondegenerate coding-theoretic
information.  In the boundary case, the equality of the bounds
explains extremality once maximality is known, but does not itself
provide explicit systems or the additional properties studied here.

Let $W\leq F^{2\ell}$ be maximum scattered of rank $\ell m$; we call
$\ell$ the \emph{order} of $W$.  Choose a basis matrix $H_0$ of
$W$.  The coding operation used below is the transparent column
extension
$$
 H_0\longmapsto [\,H_0\mid Z\,].
$$
If $U_0\leq F^{\ell(m-2)}$ is the dual system, the dual construction
has the graph form
\begin{equation}\label{eq:intro-graph}
 U_\varphi=
 \{(u,\varphi(u)+a):u\in U_0,\ a\in\F_q^s\}.
\end{equation}
A projection argument shows that every $U_\varphi$ is
$(m-3)$-scattered; in the range above it is extremal and
quasi-maximum. The choice $\varphi=0$ gives the elementary extension obtained by
adjoining $\F_q$-directions that also appears in cone constructions
\cite{AMSZ}, but its dual code is degenerate.  Puncturing onto the
first $\ell m$ coordinates shows, for every $\varphi$ and without
any additional hypothesis, that no generalized rank weight of the
extended dual code decreases.  When the images modulo $W$ of the
columns of $Z$ are $\F_q$-linearly independent, both dual codes are
nondegenerate and quasi-MRD, so the output cannot be equivalent to
the elementary extension.  For $\ell=1$, the extensions satisfying
this condition are contained in \cite[Theorem~4.4]{BBMS}.

The general geometric interpretation of Delsarte duality and the
duality properties of quasi-MRD codes belong to the framework of
\cite{BPZ}.  The projection argument behind graph stabilization is
elementary and is not, by itself, the main novelty.  The new content
lies in the equality classification and rigidity results above and,
within the extension framework, in the explicit dual formulation in
arbitrary order, the characterization of nondegeneracy,
generalized-weight transfer, the uniform orbit estimate, and the
resulting decomposition-separation statements.

The parameters do not merely describe many choices of matrices.  The
extensions of a fixed $W$ satisfying the independence condition are
parametrized by the $s$-subspaces of the quotient $F^{2\ell}/W$.
Dividing their number by the order of the full semilinear group gives
a uniform lower bound without using stabilizer information.  With
$s=\lfloor(m-3)/2\rfloor$, this lower bound tends to infinity for
fixed $q$ and $\ell$ as $m$ grows.

To separate the new systems, distribute $s=s_1+\cdots+s_\ell$ and
take a direct sum of nondegenerate
$[m+s_i,2,m-1]_{q^m/q}$ codes.  Its second generalized rank weight is
$$
 m+\min_i s_i\leq m+\left\lfloor\frac{s}{\ell}\right\rfloor.
$$
Hence a larger second weight excludes the entire comparison class.
For the order-two systems of \cite{BMNV}, an explicit numerical
condition excludes every nontrivial direct sum.  The higher-order
systems of \cite{BGM} give examples for every $\ell\geq3$ that are
separated from this order-one comparison class; we do not claim full
indecomposability in higher order.  This restricted comparison is
different from the completely decomposable codes of
\cite{Santonastaso}, whose factors are one-dimensional MRD codes.  We
also give a binary example with exact second weight.

\section{The extremal gap and preliminaries}\label{sec:preliminaries}

Throughout, $q=p^e$ is a prime power, $F=\F_{q^m}$, and dimensions
without an indicated ground field are taken over $F$.  By
$\operatorname{Aut}(F)$ we mean the full field automorphism group,
of order $em$; every such automorphism stabilizes $\F_q$ setwise.
This deliberately broader convention, rather than restricting to
$\operatorname{Gal}(F/\F_q)$, is used throughout.
For a matrix $M$ over $K$, write $\operatorname{rowsp}_K(M)$ for its
row space and $M^{-\top}=(M^{-1})^\top$ when $M$ is invertible.

\begin{definition}
An $[n,k]_{q^m/q}$ \emph{$q$-system} is an $n$-dimensional
$\F_q$-subspace $U\leq F^k$ with $\langle U\rangle_F=F^k$.
It is \emph{$h$-scattered} if
$\dim_{\F_q}(U\cap H)\leq h$ for every $h$-dimensional
$F$-subspace $H\leq F^k$.  It is \emph{extremal} if
$n=\lfloor km/(h+1)\rfloor$.  An extremal $h$-scattered system is
\emph{maximum} if $h+1\mid km$, and \emph{quasi-maximum} otherwise.
A maximum scattered system in $F^{2\ell}$ means a $1$-scattered
system of rank $\ell m$.

Two systems in $F^k$ are \emph{equivalent} if
$U'=B(\sigma(U))$ for some $B\in\GL(k,F)$ and
$\sigma\in\operatorname{Aut}(F)$.
\end{definition}

An $h$-scattered system is \emph{maximally $h$-scattered} if it is
maximal with respect to inclusion among the $h$-scattered
$\F_q$-subspaces of $F^k$.

\begin{theorem}\label{thm:bounds-equality}
Assume $h\geq1$, $m\geq h+1$, and $k\geq h+1$.  Every
$h$-scattered $U\leq F^k$ satisfies
$$
 \dim_{\F_q}U\leq
 M(k,m,h):=\left\lfloor\frac{km}{h+1}\right\rfloor,
$$
whereas every maximally $h$-scattered $U$ satisfies
$$
 \dim_{\F_q}U\geq
 L(k,m,h):=
 \left\lceil\frac{m(k-h)}{h+1}\right\rceil+h.
$$
Moreover, $L(k,m,h)=M(k,m,h)$ if and only if
$$
 m=h+1,\qquad m=h+2,
$$
or
$$
 h\text{ is even},\qquad m=h+3,\qquad
 k\equiv\frac h2\pmod{h+1}.
$$
\end{theorem}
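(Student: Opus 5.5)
The two inequalities are quoted results, so the plan is to cite them and then reduce the equivalence to elementary arithmetic. The upper bound $\dim_{\F_q}U\leq M(k,m,h)$ is \cite[Theorem~2.3]{CMPZ}; alternatively it follows from \cite[Corollary~4.4]{MNT}. The lower bound for maximally $h$-scattered systems is \cite[Theorem~2.4]{BGMN}. The hypotheses $h\geq1$, $m\geq h+1$, $k\geq h+1$ are exactly the range in which I expect these cited statements to apply. I would check this against the source statements, but I do not expect anything beyond citation here. All the work is in the characterization of $L=M$.

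For that, set $d:=m-h-1\geq0$. Use the identity
$$
 \frac{m(k-h)}{h+1}+h=\frac{km}{h+1}-\frac{hd}{h+1}.
$$
Since $h$ is an integer, it can be moved inside the ceiling, so $L=\lceil (km-hd)/(h+1)\rceil$. Next divide with remainder, $km=a(h+1)+r$ with $0\leq r\leq h$. Then $M=a$ and
$$
 L=a+\left\lceil\frac{r-hd}{h+1}\right\rceil.
$$
Hence $L=M$ if and only if this last ceiling is $0$, that is,
$$
 hd-(h+1)<r\leq hd.
$$

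I would then split into cases on $d$.

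\emph{Case $d=0$} ($m=h+1$). Here $km\equiv0\pmod{h+1}$, so $r=0$, and the condition $-(h+1)<0\leq0$ holds.

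\emph{Case $d=1$} ($m=h+2$). The condition reads $-1<r\leq h$, which holds for every admissible $r$.

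\emph{Case $d=2$} ($m=h+3$). The condition forces $r>h-1$, so $r=h$. Since $m\equiv2\pmod{h+1}$, this means $2k\equiv-1\pmod{h+1}$. That congruence is solvable only when $h+1$ is odd, i.e.\ $h$ is even. In that case $2$ is invertible modulo $h+1$ and the solution is $k\equiv h/2$, because $2\cdot h/2=h\equiv-1$.

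\emph{Case $d\geq3$}. The condition needs $r>hd-h-1\geq 2h-1\geq h$ (using $h\geq1$). This contradicts $r\leq h$, so equality never occurs.

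The only step needing care is the reduction to a single ceiling together with the bookkeeping of the remainder $r$. I expect no genuine obstacle once the cited bounds are in place. I would also record as a by-product that $L\leq M$ always holds, since $r-hd\leq r\leq h<h+1$.
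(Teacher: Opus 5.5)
Your proof is correct, and it takes a different route through the arithmetic than the paper.

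\textbf{How the paper does it.} The paper sets $N=h+1$ and divides $m$ and $k$ separately, writing $m=aN+r$ and $k=cN+u$. It rewrites $L=M$ as the vanishing of $(1-a)h-r+\lceil r(u+1)/N\rceil-\lfloor ru/N\rfloor$. It then excludes $a\geq2$, which needs a separate check of the borderline case $h=r=1$. With $a=1$ it analyses the ceiling--floor difference for residues $r=0,1,2$ of $m$, and reads off the last case as $N=2u+1$.

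\textbf{What your route does differently.} Your identity $L=\lceil (km-hd)/(h+1)\rceil$, with $d=m-h-1$, reduces everything to a single ceiling and a single residue of $km$. The resulting criterion $hd-(h+1)<r\leq hd$ needs no floor/ceiling bookkeeping. Your one case $d\geq3$ covers at once what the paper handles in two steps: excluding $a\geq2$, and then excluding residues of $m$ at least $3$ when $a=1$.

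\textbf{What each route buys.} In the paper, the residue $u$ of $k$ appears explicitly, so the condition $k\equiv h/2\pmod{h+1}$ falls out directly as $u=h/2$. Your route needs the short extra step of solving $2k\equiv-1\pmod{h+1}$. You carry that out correctly, including the parity obstruction for odd $h$ and the converse direction.

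\textbf{A small slip in your side remark.} The theorem does not need the claim $L\leq M$, but your justification of it is incomplete. The bound $r-hd\leq r\leq h<h+1$ only gives $(r-hd)/(h+1)<1$, and hence only $L\leq M+1$. To conclude $L\leq M$ you need $r-hd\leq 0$. This holds for $d\geq1$ because $r\leq h\leq hd$. It holds for $d=0$ because $h+1\mid km$ then forces $r=0$.
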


\begin{proof}
The upper bound is \cite[Theorem~2.3]{CMPZ}; see also
\cite[Corollary~4.4]{MNT}.  The lower bound for inclusion-maximal
systems is \cite[Theorem~2.4]{BGMN}.  We determine when these two
integer bounds coincide.  Put $N=h+1$ and write
$m=aN+r$ and $k=cN+u$, where $a,c\geq1$ and
$0\leq r,u<N$.  Direct substitution shows that $L=M$ is equivalent
to
$$
 (1-a)h-r+
 \left\lceil\frac{r(u+1)}N\right\rceil
 -\left\lfloor\frac{ru}N\right\rfloor=0.
$$
The difference of the last two terms is zero if $r=0$ and belongs to
$\{1,2\}$ otherwise.  If $a\geq2$, the left-hand side is negative
for $r=0$ and is at most $-h-r+2$ for $r>0$; equality in this estimate
could occur only for $h=r=1$, when the actual difference of the last
two terms is one and the left-hand side is $-1$.  Hence $a=1$, and
the equality reduces to
$$
 \left\lceil\frac{r(u+1)}N\right\rceil
 -\left\lfloor\frac{ru}N\right\rfloor=r.
$$
Thus $r=0$ or $r=1$, with arbitrary $u$, or $r=2$ and
$2u<N<2u+2$.  The last condition is equivalent to
$N=2u+1$.  Translating $r$ and $u$ back into $m$ and $k$ gives the
three cases above.
\end{proof}

\begin{corollary}\label{cor:specialized-extremal-gap}
Let $m\geq5$, $\ell\geq1$, $h=m-3$, and
$k=\ell(m-2)+s$, where $1\leq s<(m-2)/2$.  Then
$$
 M(k,m,h)=\ell m+s
$$
and
$$
 L(k,m,h)=
 \begin{cases}
  \ell m+s,&m\text{ is odd and }s=(m-3)/2,\\
  \ell m+s-1,&\text{otherwise}.
 \end{cases}
$$
Thus, except in the first case, the general lower bound for
maximally $(m-3)$-scattered systems lies exactly one below the
extremal rank.
\end{corollary}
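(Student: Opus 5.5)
The plan is to substitute $h=m-3$, so that $h+1=m-2$, and $k=\ell(m-2)+s$ directly into the two expressions $M(k,m,h)$ and $L(k,m,h)$ from Theorem~\ref{thm:bounds-equality}. Each one should reduce to $\ell m+s$ plus a small fractional correction whose floor or ceiling is determined by the hypothesis $1\leq s<(m-2)/2$. The hypothesis $m\geq5$ ensures $h\geq2\geq1$, and $k\geq m-2=h+1$ because $\ell,s\geq1$, so Theorem~\ref{thm:bounds-equality} applies.

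For the upper bound, I will write
$$
 \frac{km}{m-2}=\ell m+\frac{sm}{m-2}=\ell m+s+\frac{2s}{m-2}.
$$
The assumption $2s<m-2$ gives $0<2s/(m-2)<1$, so $M(k,m,h)=\ell m+s$.

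For the lower bound, I will compute $k-h=\ell(m-2)+s-(m-3)$. Using $m(s-m+3)=(m-2)(s-m+3)+2(s-m+3)$ and then adding $h=m-3$, I expect
$$
 \frac{m(k-h)}{m-2}+h=\ell m+s+\frac{2(s-m+3)}{m-2}
 =\ell m+s-2+\frac{2s+2}{m-2}.
$$
Since the additive part is an integer, $L(k,m,h)=\ell m+s-2+\lceil (2s+2)/(m-2)\rceil$. The integrality of $s$ together with $2s<m-2$ gives $2s\leq m-3$, hence $0<2s+2\leq m-1$. It follows that
$$
 \left\lceil\frac{2s+2}{m-2}\right\rceil=
 \begin{cases}2,& 2s+2=m-1,\\ 1,&\text{otherwise}.\end{cases}
$$
The case $2s+2=m-1$ is exactly the case where $m$ is odd and $s=(m-3)/2$. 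This yields the two stated values of $L(k,m,h)$, and the final sentence of the corollary then follows immediately.

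As a consistency check, I will compare with the equality criterion of Theorem~\ref{thm:bounds-equality}. Here $m=h+3$; $h$ is even if and only if $m$ is odd; and $k\equiv s\pmod{m-2}$ with $0<s<m-2$. Hence $k\equiv h/2\pmod{h+1}$ holds precisely when $s=(m-3)/2$, so the theorem predicts equality in exactly the boundary case, as found. The only point needing care is the integrality step $2s\leq m-3$, which bounds the ceiling by $2$ and excludes any larger jump. I do not expect any genuine obstacle.
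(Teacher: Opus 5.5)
Your proposal is correct and follows the paper's proof essentially verbatim. Both substitute $h+1=m-2$ and $k=\ell(m-2)+s$ to obtain $M=\ell m+s+\lfloor 2s/(m-2)\rfloor$ and $L=\ell m+s-2+\lceil 2(s+1)/(m-2)\rceil$, and then use $2s\leq m-3$ to see that this ceiling is $1$ or $2$; your explicit integrality step and the consistency check against Theorem~\ref{thm:bounds-equality} are useful additions rather than a different route.
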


\begin{proof}
Since $h+1=m-2$ and $2s<m-2$,
$$
 M(k,m,h)
 =\ell m+s+\left\lfloor\frac{2s}{m-2}\right\rfloor
 =\ell m+s.
$$
Moreover,
$$
 L(k,m,h)
 =\ell m+s-2+
 \left\lceil\frac{2(s+1)}{m-2}\right\rceil.
$$
The last ceiling is two precisely when $m$ is odd and
$s=(m-3)/2$; it is one in every other case allowed by the
hypotheses.
\end{proof}

\begin{definition}
An $[n,r]_{q^m/q}$ \emph{rank-metric code} is an $r$-dimensional
$F$-subspace $C\leq F^n$.  For $x=(x_1,\ldots,x_n)$, its rank
weight is
$\wt(x)=\dim_{\F_q}\langle x_1,\ldots,x_n\rangle_{\F_q}$, and
$d(C)=\min\{\wt(x):x\in C\setminus\{0\}\}$.  Its dual is
$C^\perp=\{y\in F^n:xy^\top=0\text{ for every }x\in C\}$.
\end{definition}

Fix an $\F_q$-basis of $F$ and expand $x\in F^n$ as an
$m\times n$ matrix over $\F_q$; its row space is
$\operatorname{Rsupp}_q(x)$.  For an $F$-subspace $V\leq F^n$, let
$\operatorname{Rsupp}_q(V)$ be the sum of the supports of a basis of
$V$.  These spaces do not depend on the chosen bases.  The $i$-th
generalized rank weight is
\begin{equation}\label{eq:generalized-rank-weight}
 d_i(C):=\min\left\{
 \dim_{\F_q}\operatorname{Rsupp}_q(V):
 V\leq C,\ \dim_F V=i
 \right\}.
\end{equation}

Two $F$-linear codes $C,C'\leq F^n$ are \emph{rank-metric
equivalent} if $C'=\sigma(C)A$ for some $A\in\GL(n,q)$ and
$\sigma\in\operatorname{Aut}(F)$.  The Singleton bound is
\begin{equation}\label{eq:singleton}
 mr\leq
 \min\{m(n-d(C)+1),\,n(m-d(C)+1)\}.
\end{equation}
A code meeting this bound is MRD.  For $n>m$, its rank defect is
$$
 \Rdef(C)=m-\left\lceil\frac{mr}{n}\right\rceil-d(C)+1.
$$
A code with rank defect zero is \emph{quasi-MRD}
\cite{Delsarte,DGLR,Gabidulin,MNT}.

Let $H\in F^{r\times n}$ be a generator matrix of an
$[n,r]_{q^m/q}$ code $D$.  It is \emph{nondegenerate} if the columns
of $H$ are $\F_q$-independent.  Their $\F_q$-span is then an
$[n,r]_{q^m/q}$ $q$-system, and every $q$-system arises in this way.
This correspondence preserves equivalence classes
\cite[Theorem~2.15]{MNT}.

\begin{lemma}\label{lem:equivalence-duality}
If $C'=\sigma(C)A$, with $A\in\GL(n,q)$, then
$$
 (C')^\perp=\sigma(C^\perp)A^{-\top},
 \qquad d_i(C')=d_i(C)\qquad(1\leq i\leq\dim_F C).
$$
The code--system correspondence and equivalence remain compatible
after taking duals.
\end{lemma}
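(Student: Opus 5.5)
The plan is to compute both identities directly from the definitions, using the standard bilinear form $\langle x,y\rangle=xy^\top$ on $F^n$.

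\emph{Duality.} Let $y\in C^\perp$ and $x\in C$. Since $\sigma$ is a field automorphism acting coordinatewise, $\sigma(x)\sigma(y)^\top=\sigma(xy^\top)=0$. Hence $\sigma(C^\perp)\subseteq\sigma(C)^\perp$. Both spaces have $F$-dimension $n-\dim_F C$, because $\sigma$ is a bijection preserving $F$-dimension (it maps an $F$-basis to an $F$-basis). So $\sigma(C)^\perp=\sigma(C^\perp)$.

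Next, for $A\in\GL(n,q)$ we have $(xA)(yA^{-\top})^\top=xAA^{-1}y^\top=xy^\top$. This gives $(DA)^\perp\supseteq D^\perp A^{-\top}$ for any code $D$, and equality follows by comparing dimensions. Applying this with $D=\sigma(C)$ yields $(C')^\perp=\sigma(C^\perp)A^{-\top}$. Note that $A^{-\top}$ again has entries in $\F_q$.

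\emph{Generalized weights.} It suffices to show that $V\mapsto\sigma(V)A$ is a dimension-preserving bijection between the $F$-subspaces of $C$ and those of $C'$ satisfying
\[
 \dim_{\F_q}\operatorname{Rsupp}_q(\sigma(V)A)=\dim_{\F_q}\operatorname{Rsupp}_q(V).
\]

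\emph{Right multiplication.} For a single vector $x$, expanding $xA$ in the fixed $\F_q$-basis gives the matrix $M(x)A$, because $A$ has entries in $\F_q$. Its row space is $\operatorname{Rsupp}_q(x)A$. Summing over a basis of $V$ gives $\operatorname{Rsupp}_q(VA)=\operatorname{Rsupp}_q(V)A$, which has the same dimension since $A$ is invertible.

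\emph{The automorphism.} This is the step needing care, because $\sigma$ need not fix the chosen $\F_q$-basis $\{\beta_j\}$. Here I would use a basis-free description of the support: $\operatorname{Rsupp}_q(x)$ is identified with the dual of the $\F_q$-span of the coordinates of $x$, so $\dim_{\F_q}\operatorname{Rsupp}_q(V)$ equals the $\F_q$-dimension of the image of the $\F_q$-linear map $\F_q^n\to F^{\dim V}$, $c\mapsto(v_ic^\top)_i$, for a basis $(v_i)$ of $V$. Equivalently, it equals $n-\dim_{\F_q}\{c\in\F_q^n: vc^\top=0 \text{ for all } v\in V\}$.

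Since $\sigma$ stabilizes $\F_q$ setwise, $\sigma$ induces a bijection $c\mapsto\sigma(c)$ of $\F_q^n$. Moreover, $vc^\top=0$ if and only if $\sigma(v)\sigma(c)^\top=0$. The kernel for $\sigma(V)$ is therefore $\sigma$ applied to the kernel for $V$. This is an $\F_q$-subspace of the same size, and since $\sigma$ is additive and bijective on $\F_q$, also of the same dimension. Hence the support dimensions agree.

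Taking minima over $i$-dimensional $V\le C$ then gives $d_i(C')=d_i(C)$.

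\emph{Compatibility with the system correspondence.} A generator matrix $H$ of $C$ becomes $\sigma(H)A$ for $C'$. Its column span over $\F_q$ is $\sigma(U)$ re-parametrized by $A$, which is the same $\F_q$-space. So the associated systems are equivalent. Applying the duality identity shows that the same holds for the dual codes. Nondegeneracy is also preserved, because it is the condition $d_1$ of the dual is at least $2$, equivalently full support, and that is invariant by the argument above.
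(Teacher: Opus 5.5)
Your proof is correct and follows the same route as the paper's three-line argument. The paper obtains the duality formula from invariance of $xy^\top$ under $\sigma$ and under the pair $(A,A^{-\top})$, the equality of generalized weights from invariance of rank-support dimensions, and compatibility by transforming a generator matrix. You do the same, and your identity $\dim_{\F_q}\operatorname{Rsupp}_q(V)=n-\dim_{\F_q}\{c\in\F_q^n: vc^\top=0 \text{ for all } v\in V\}$ correctly supplies a detail the paper leaves implicit: under its full-$\operatorname{Aut}(F)$ convention, $\sigma$ need not act $\F_q$-linearly.
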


\begin{proof}
The operations $\sigma$ and $A$ preserve rank-support dimensions,
and the duality formula follows directly from the inner product.
The remaining assertion follows by applying the same operations to
a generator matrix.
\end{proof}

Suppose $D$ is nondegenerate and $d(D)>1$, and let $G$ be a generator
matrix of $D^\perp$.  Its columns are $\F_q$-independent, since an
$\F_q$-relation would be a rank-one word of $D$.  Their span is the
\emph{dual $q$-system} of the column system of $D$; it is well defined
up to equivalence by the code--system correspondence
\cite[Theorem~2.15]{MNT}.

\begin{theorem}\label{thm:bridge}
Let $C$ be a nondegenerate $[n,k]_{q^m/q}$ code, let $U$ be an
associated $q$-system, and assume $h<k<n$.  Then
$$
 U\text{ is }h\text{-scattered}
 \quad\Longleftrightarrow\quad
 d(C^\perp)\geq h+2.
$$
\end{theorem}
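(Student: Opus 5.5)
We prove Theorem~\ref{thm:bridge} by translating the scatteredness condition on $U$ into a statement about codewords of $C^\perp$ through a generator matrix. Fix a generator matrix $G\in F^{k\times n}$ of $C$ whose columns $g_1,\ldots,g_n$ form an $\F_q$-basis of $U$. This is possible by nondegeneracy and the code--system correspondence, and by Lemma~\ref{lem:equivalence-duality} the choice within the equivalence class does not affect either side. A vector $y\in F^n$ lies in $C^\perp$ exactly when $Gy^\top=\sum_j y_jg_j=0$. So we must relate $F$-linear relations among the $g_j$ with small rank support to $h$-dimensional $F$-subspaces $H$ meeting $U$ in large dimension.

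The key dictionary is the following. For an $\F_q$-subspace $S\leq\F_q^n$ of dimension $t$, with basis matrix $A\in\F_q^{t\times n}$, consider the $\F_q$-subspace $U_S=\{\sum_j a_jg_j:a\in S\}$ of $U$; its $\F_q$-dimension is $t$ since the $g_j$ are independent. A word $y$ has $\operatorname{Rsupp}_q(y)\leq S$ precisely when $y=xA$ for some $x\in F^t$. Then $Gy^\top=GA^\top x^\top$, so $C^\perp$ has a nonzero word supported in $S$ if and only if the $t$ vectors $GA^\top$ (the columns spanning $U_S$) are $F$-linearly dependent, that is, $\dim_F\langle U_S\rangle_F<t$. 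Consequently $d(C^\perp)\geq h+2$ holds if and only if every $\F_q$-subspace $V\leq U$ with $\dim_{\F_q}V\leq h+1$ satisfies $\dim_F\langle V\rangle_F=\dim_{\F_q}V$. Because a dependent set stays dependent in a larger set, it is enough to check this for $\dim_{\F_q}V=h+1$.

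It remains to see that this last condition is equivalent to $h$-scatteredness. Suppose some $h$-dimensional $F$-subspace $H$ has $\dim_{\F_q}(U\cap H)\geq h+1$. Pick $V\leq U\cap H$ of $\F_q$-dimension $h+1$; its $F$-span lies in $H$ and so has dimension at most $h<h+1$, giving a violation. Conversely, suppose $V\leq U$ has $\F_q$-dimension $h+1$ but $\dim_F\langle V\rangle_F\leq h$. Since $k>h$, we can enlarge $\langle V\rangle_F$ to an $h$-dimensional $F$-subspace $H$, and then $\dim_{\F_q}(U\cap H)\geq h+1$, so $U$ is not $h$-scattered. The hypothesis $k<n$ guarantees $C^\perp\neq0$, so $d(C^\perp)$ is defined. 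We also need $h+1\leq n$ so that a subspace $V$ of dimension $h+1$ exists inside $U$, and this follows from $h<k<n$.

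The main obstacle is the careful verification that rank support contained in $S$ corresponds exactly to $y\in F^t A$, i.e.\ that $\operatorname{Rsupp}_q(y)\leq\operatorname{rowsp}_{\F_q}(A)$ iff $y\in\operatorname{rowsp}_F(A)$. We would prove this by expanding $y$ in an $\F_q$-basis of $F$: each row of the expansion lies in $S$, so it is an $\F_q$-combination of the rows of $A$, and reassembling these combinations gives the $F$-coefficients $x$. The converse direction is immediate. Everything else is direct bookkeeping.
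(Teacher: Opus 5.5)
Your proof is correct. The paper gives no argument for this statement; it simply cites \cite[Theorem~3.3]{MNT}. Your proposal is therefore a self-contained replacement for that citation rather than a reconstruction of a proof in the text.

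The two ingredients you isolate are the right ones. The first is the support lemma: $\operatorname{Rsupp}_q(y)\leq\operatorname{rowsp}_{\F_q}(A)$ if and only if $y\in\operatorname{rowsp}_F(A)$. The second is the observation that words of $C^\perp$ supported in $S$ are exactly the $F$-relations among the $\F_q$-basis $GA^\top$ of $U_S$. For the direction from an $F$-dependence to a \emph{nonzero} dual word, state explicitly that $x\mapsto xA$ is injective on $F^t$. This holds because $\F_q$-independent rows of $A$ remain $F$-independent, the same fact the paper uses in Proposition~\ref{prop:graph-stabilization}.

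An alternative route ties in with the generalized-weight facts the paper quotes right after the theorem. It uses
$d_r(C)=n-\max\{\dim_{\F_q}(U\cap H):\dim_F H=k-r\}$.
With this formula, $h$-scatteredness becomes $d_{k-h}(C)=n-h$. By strict monotonicity and $d_k(C)=n$, this says that $n-h,\ldots,n$ are all weights of $C$. Wei-type duality between the hierarchies of $C$ and $C^\perp$ then turns this into the statement that no $d_j(C^\perp)$ lies in $\{1,\ldots,h+1\}$, i.e.\ $d(C^\perp)\geq h+2$.

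That route relies on two nontrivial theorems: the geometric description of $d_r$ and the duality of weight hierarchies. In exchange, it places the statement inside the full weight hierarchy of $C$. Your argument needs only elementary linear algebra. It also exhibits the correspondence directly: low-weight words of $C^\perp$ match $F$-dependent $\F_q$-subspaces of $U$ of dimension at most $h+1$.
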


This is \cite[Theorem~3.3]{MNT}.  We also use that a nondegenerate
$[n,r]_{q^m/q}$ code satisfies $d_r=n$ and has strictly increasing
generalized rank weights \cite{Ducoat,KMU,MNT}.

\begin{proposition}\label{prop:rigid-equality-cases}
Let $U\leq F^k$ be an extremal $h$-scattered $q$-system, where
$k\geq h+1$.

If $m=h+1$, then $U$ is equivalent to $\F_q^k$.

If $m=h+2$, write
$$
 k=a(m-1)+r,\qquad a\geq1,\qquad 0\leq r<m-1,
$$
and put
$$
 \mathcal G_m=
 \{(x,x^q,\ldots,x^{q^{m-2}}):x\in F\}\leq F^{m-1}.
$$
Then $U$ is equivalent to
$$
 \mathcal G_m^{\oplus a}\oplus\F_q^r\leq F^k,
$$
where the summands occupy disjoint $F$-coordinate blocks.  In
particular, the cases $m=h+1$ and $m=h+2$ yield no equivalence
classes beyond the elementary $\F_q$ and Gabidulin direct sums.
\end{proposition}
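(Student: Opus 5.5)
The plan is to treat the two cases separately. The case $m=h+1$ is a direct basis argument. The case $m=h+2$ is reduced, via Theorem~\ref{thm:bridge}, to classifying codes in which every nonzero word has full rank weight $m$.

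\emph{Case $m=h+1$.} Here $M(k,m,h)=\lfloor km/m\rfloor=k$, so $\dim_{\F_q}U=k$. Since $\langle U\rangle_F=F^k$, an $\F_q$-basis $u_1,\ldots,u_k$ of $U$ spans $F^k$ over $F$. As there are exactly $k$ of these vectors, they form an $F$-basis of $F^k$. Let $B\in\GL(k,F)$ send $u_i$ to the $i$-th standard vector. Then $B(U)=\F_q^k$, which proves the first claim.

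\emph{Case $m=h+2$: reduction to the dual code.} Now $h+1=m-1$, so
$$
n=\lfloor km/(m-1)\rfloor=k+\lfloor k/(m-1)\rfloor=k+a.
$$
Let $C$ be a nondegenerate $[n,k]_{q^m/q}$ code whose column system is $U$. The hypotheses of Theorem~\ref{thm:bridge} hold, because $h=m-2<m-1\le k$ and $k<k+a=n$ as $a\ge1$. Hence $d(C^\perp)\ge h+2=m$, and since rank weights never exceed $m$, every nonzero word of $D:=C^\perp$ (an $[n,a]$ code) has rank weight exactly $m$.

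\emph{Case $m=h+2$: classifying $D$.} Let $t=\dim_{\F_q}\operatorname{Rsupp}_q(D)$. After a suitable $A\in\GL(n,q)$, $D$ is a nondegenerate $[t,a]$ code $D'$ padded with $n-t$ zero coordinates. Its system $V\le F^a$ has $\F_q$-dimension $t$, so $t\le am$. Minimum distance $m$ translates into $\dim_{\F_q}(V\cap\mathcal H)\le t-m$ for every $F$-hyperplane $\mathcal H$. I would now count pairs (nonzero vector of $V$, hyperplane containing it). Every nonzero vector lies in $(q^{m(a-1)}-1)/(q^m-1)$ hyperplanes, and there are $(q^{ma}-1)/(q^m-1)$ hyperplanes. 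This gives
$$
(q^t-1)(q^{m(a-1)}-1)\le (q^{ma}-1)(q^{t-m}-1),
$$
which simplifies to $q^{m(a-1)}(q^m-1)\le q^{t-m}(q^m-1)$, i.e.\ $t\ge am$. Hence $t=am$ and $V=F^a$ as an $\F_q$-space. Choosing an $\F_q$-basis of $F$ in each coordinate shows that $D'$ is equivalent to the direct sum of $a$ copies of the $[m,1,m]$ code $\langle(\omega_1,\ldots,\omega_m)\rangle_F$, where the $\omega_i$ form an $\F_q$-basis of $F$. Then $D$ is that sum plus $n-am=r$ zero columns.

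\emph{Case $m=h+2$: dualizing back.} By Lemma~\ref{lem:equivalence-duality}, $C=D^\perp$ is equivalent to
$$
\langle(\omega_1,\ldots,\omega_m)\rangle^\perp\oplus\cdots\oplus\langle(\omega_1,\ldots,\omega_m)\rangle^\perp\oplus F^r,
$$
with $a$ summands of the first kind. The factor $F^r$ has system $\F_q^r$. It remains to check that the system of the $[m,m-1]$ code $\langle(\omega_1,\ldots,\omega_m)\rangle^\perp$ is equivalent to $\mathcal G_m$. Up to equivalence every $[m,m-1,2]$ MRD code is the same, because its dual is a rank-$m$ one-dimensional code, which is unique by the argument above with $a=1$. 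So it suffices to note that $\mathcal G_m$ is the system of the Gabidulin $[m,m-1]$ code. Since the code--system correspondence respects direct sums on disjoint coordinate blocks, $U$ is equivalent to $\mathcal G_m^{\oplus a}\oplus\F_q^r$.

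\emph{Main obstacle.} I expect the hard step to be forcing $t=am$, i.e.\ showing that the support of $D$ is as large as possible so that the system $V$ fills $F^a$. The hyperplane-counting inequality is what I would rely on, and the step that needs care is carrying out its arithmetic correctly.
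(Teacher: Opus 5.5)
Your proof is correct, and its overall structure matches the paper's. Both use the basis argument for $m=h+1$. For $m=h+2$ both pass to $D=C^\perp$ of dimension $a$ with $d(D)=m$, put a generator matrix into block form, and identify the $[m,m-1]$ factor through the Gabidulin code and the uniqueness of $[m,1,m]$ codes.

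The genuine difference is the step you flag as the main obstacle: showing that the $\F_q$-span $S$ of the columns of a generator matrix $H$ of $D$ is all of $F^a$.

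\emph{Paper's route.} It uses trace duality. If $S\neq F^a$, nondegeneracy of $(x,y)\mapsto\operatorname{Tr}_{F/\F_q}(xy^\top)$ gives a nonzero $x\in F^a$ annihilating $S$. Every coordinate of the nonzero word $xH$ then lies in the trace-zero hyperplane of $F$, so $\wt(xH)\leq m-1$, a contradiction.

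\emph{Your route.} You first puncture to the rank support, obtaining a nondegenerate $[t,a,m]$ code with system $V$, where $t=\dim_{\F_q}S$. You translate the distance into $\dim_{\F_q}(V\cap\mathcal H)\leq t-m$ and double count incidences between nonzero vectors of $V$ and $F$-hyperplanes. Your arithmetic is right: the top terms $q^{t+m(a-1)}$ cancel, leaving $q^{m(a-1)}(q^m-1)\leq q^{t-m}(q^m-1)$, i.e.\ $t\geq am$. For $a=1$ the count degenerates correctly to $t\geq m$.

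\emph{Comparison.} The trace argument is a one-line contradiction with no computation. It also applies directly to a possibly degenerate $H$, so no puncturing step is needed. Your averaging argument is a Plotkin-type count on the system side. It is more computational, but it uses only the hyperplane description of rank weights and no bilinear form on $F^a$.

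A small plus on your side: you verify explicitly the hypotheses $h<k<n$ of Theorem~\ref{thm:bridge}, which the paper leaves implicit.
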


\begin{proof}
Assume first that $m=h+1$.  Extremality gives
$\dim_{\F_q}U=k$.  Any $\F_q$-basis of $U$ consists of $k$ vectors
that span $F^k$ over $F$, hence it is an $F$-basis of $F^k$.  An
element of $\GL(k,F)$ maps this basis to the standard basis, so
$U$ is equivalent to $\F_q^k$.

Now let $m=h+2$.  The extremal rank is
$$
 n=\left\lfloor\frac{km}{m-1}\right\rfloor=am+r.
$$
Let $C$ be the nondegenerate $[n,k]_{q^m/q}$ code associated with
$U$ and put $D=C^\perp$.  Then $\dim_FD=n-k=a$, and
Theorem~\ref{thm:bridge} gives
$$
 d(D)\geq h+2=m.
$$
Since rank weight is at most $m$, equality holds.

Let $H\in F^{a\times n}$ be a generator matrix of $D$, and let $S$
be the $\F_q$-span of its columns in $F^a$.  Suppose that
$S\neq F^a$.  The trace pairing
$$
 (x,y)\longmapsto
 \operatorname{Tr}_{F/\F_q}(xy^\top)
$$
on $F^a$ is nondegenerate, so there is a nonzero $x\in F^a$ such
that $\operatorname{Tr}_{F/\F_q}(xy^\top)=0$ for every $y\in S$.
The word $xH$ is nonzero because $H$ has full row rank, while all
its coordinates belong to the trace-zero hyperplane of $F$.
Consequently $\wt(xH)\leq m-1$, contradicting $d(D)=m$.  Hence
$S=F^a$.

Fix an $\F_q$-basis
$\boldsymbol\beta=(\beta_1,\ldots,\beta_m)$ of $F$.  Since the
columns of $H$ span the $am$-dimensional $\F_q$-space $F^a$, there
is an $A\in\GL(n,q)$ such that
$$
 HA=
 \begin{pmatrix}
  \boldsymbol\beta&0&\cdots&0&0\\
  0&\boldsymbol\beta&\cdots&0&0\\
  \vdots&\vdots&\ddots&\vdots&\vdots\\
  0&0&\cdots&\boldsymbol\beta&0
 \end{pmatrix},
$$
where the final block has width $r$.  Thus $D$ is rank-metric
equivalent to
$$
 D_{\boldsymbol\beta}^{\oplus a}\oplus0^r,
 \qquad
 D_{\boldsymbol\beta}=F\boldsymbol\beta,
$$
and $D_{\boldsymbol\beta}$ is a one-dimensional $[m,1,m]_{q^m/q}$
Gabidulin code.  After deleting the final zero block, this is a
completely decomposable code in the sense of
\cite[Theorem~3.2]{Santonastaso}.  By
Lemma~\ref{lem:equivalence-duality}, $C$ is equivalent to
$$
 (D_{\boldsymbol\beta}^{\perp})^{\oplus a}\oplus F^r.
$$
To identify the corresponding system, choose an $\F_q$-basis
$\boldsymbol\xi=(\xi_1,\ldots,\xi_m)$ of $F$ and form the Moore
matrix
$$
 M_{\boldsymbol\xi}
 =(\xi_j^{q^i})_{\substack{0\leq i\leq m-2\\1\leq j\leq m}}.
$$
Its row space is an $[m,m-1,2]_{q^m/q}$ Gabidulin code
\cite{Gabidulin}, so its dual is one-dimensional and has minimum
rank distance $m$.  The coordinates of a generator of that dual
therefore form an $\F_q$-basis of $F$.  An element of $\GL(m,q)$
sends this basis to $\boldsymbol\beta$, and hence
$\operatorname{rowsp}_F(M_{\boldsymbol\xi})$ is rank-metric
equivalent to $D_{\boldsymbol\beta}^{\perp}$.  The columns of
$M_{\boldsymbol\xi}$ span $\mathcal G_m$, so the code--system
correspondence gives the assertion.
\end{proof}

\section{Graph extensions and dual \texorpdfstring{$q$}{q}-systems}
\label{sec:stabilization}

Proposition~\ref{prop:rigid-equality-cases} shows that the first two
equality cases of Theorem~\ref{thm:bounds-equality} contain only the
elementary equivalence classes.  For the next case,
Corollary~\ref{cor:specialized-extremal-gap} reduces the extremal
problem to gaining, in general, one dimension beyond the rank
ensured by inclusion-maximality.  We now construct systems attaining
the upper bound uniformly.

Let $m\geq5$, $\ell\geq1$, and put
$$
 h=m-3,\qquad n_0=\ell m,\qquad k_0=\ell(m-2).
$$
Let $W\leq F^{2\ell}$ be a maximum scattered $q$-system of rank
$n_0$, choose an ordered $\F_q$-basis of $W$ as the columns of
$H_0$, and put $D_0=\operatorname{rowsp}_F(H_0)$.  By
\cite[Theorem~4.9]{MNT},
\begin{equation}\label{eq:initial-code-parameters}
 D_0\text{ has parameters }[n_0,2\ell,m-1]_{q^m/q}.
\end{equation}
Let $C_0=D_0^\perp$, choose a generator matrix
$G_0\in F^{k_0\times n_0}$, and let $U_0\leq F^{k_0}$ be its
column system.  Since $d(D_0)>1$, the columns of $G_0$ are
$\F_q$-independent; the row rank of $G_0$ shows that they span
$F^{k_0}$ over $F$.  Thus $U_0$ is an $[n_0,k_0]_{q^m/q}$
$q$-system.  Since $d(D_0)=m-1=h+2$, Theorem~\ref{thm:bridge}
shows that $U_0$ is $h$-scattered.  Its rank is $n_0$, which equals
$k_0m/(h+1)$, so $U_0$ is maximum $h$-scattered.

For an $\F_q$-linear map $\varphi:U_0\to F^s$, define
\begin{equation}\label{eq:graph-stabilization}
 U_\varphi=
 \{(u,\varphi(u)+a):u\in U_0,\ a\in\F_q^s\}
 \leq F^{k_0+s}.
\end{equation}

\begin{proposition}\label{prop:graph-stabilization}
For every $s\geq1$ and every $\F_q$-linear map $\varphi$, the space
$U_\varphi$ is an $h$-scattered $[n_0+s,k_0+s]_{q^m/q}$
$q$-system.
\end{proposition}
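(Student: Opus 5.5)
First I will verify that $U_\varphi$ is a $q$-system of the stated rank.
The map $U_0\times\F_q^s\to U_\varphi$, $(u,a)\mapsto(u,\varphi(u)+a)$, is $\F_q$-linear.
It is injective, because $(u,\varphi(u)+a)=0$ forces $u=0$ and then $a=0$.
Hence $\dim_{\F_q}U_\varphi=n_0+s$.
For the spanning property, take $u\in U_0$ and $a\in\F_q^s$.
Then $(u,\varphi(u))$ and $(0,a)$ both lie in $U_\varphi$, since $(u,\varphi(u))$ is the case $a=0$ and $(0,a)$ is the case $u=0$.
So $\langle U_\varphi\rangle_F$ contains $0\oplus F^s$, because $\F_q^s$ spans $F^s$ over $F$.
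It therefore also contains $(u,0)$ for every $u\in U_0$.
Since $\langle U_0\rangle_F=F^{k_0}$, we get $\langle U_\varphi\rangle_F=F^{k_0+s}$.

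The main point is $h$-scatteredness, and I will prove it by projection.
Let $\pi:F^{k_0+s}\to F^{k_0}$ be the projection onto the first $k_0$ coordinates.
Its restriction to $U_\varphi$ is surjective onto $U_0$.
Its kernel on $U_\varphi$ is $K=\{(0,a):a\in\F_q^s\}$, which equals $U_\varphi\cap\ker\pi$ and has $\F_q$-dimension $s$.
Fix an $h$-dimensional $F$-subspace $H\leq F^{k_0+s}$ and put $E=U_\varphi\cap H$.
Then
\[
\dim_{\F_q}E=\dim_{\F_q}\pi(E)+\dim_{\F_q}(E\cap K).
\]
Two observations control the two terms.
First, $\pi(E)\subseteq U_0\cap\pi(H)$, and $\pi(H)$ is an $F$-subspace of dimension at most $h$.
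Second, $E\cap K=\{(0,a)\in H: a\in\F_q^s\}$.
This is a set of $\F_q$-vectors inside the $F$-space $H\cap\ker\pi$, and $\F_q$-independent vectors of $\F_q^s$ are $F$-independent.
Hence $\dim_{\F_q}(E\cap K)\leq\dim_F(H\cap\ker\pi)=:t$.
Since $\dim_F\pi(H)=h-t$, it suffices to show
\[
\dim_{\F_q}(U_0\cap H')\leq\dim_F H'
\]
for every $F$-subspace $H'\leq F^{k_0}$ with $\dim_F H'\leq h$.

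The step I expect to be the main obstacle is exactly this last inequality.
The $h$-scattered condition is stated only for subspaces of dimension exactly $h$, whereas here I need it for every dimension $j\leq h$.
I would first try the standard fact that an $h$-scattered system is also $j$-scattered for every $j\leq h$.

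To prove that fact, take $H'$ of dimension $j<h$ with $\dim_{\F_q}(U_0\cap H')>j$.
Since $U_0$ spans $F^{k_0}$ and $k_0\geq h$, I can enlarge $H'$ one step at a time by adjoining vectors of $U_0$ outside the current subspace.
Each step raises the $F$-dimension by one and the $\F_q$-dimension of the intersection with $U_0$ by at least one.
After $h-j$ steps this produces an $h$-space meeting $U_0$ in more than $h$ dimensions, which contradicts $h$-scatteredness.

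Note that $k_0=\ell(m-2)\geq m-2>h$, so the enlargement is possible.
The scattered property of $U_0$ itself was established above via Theorem~\ref{thm:bridge}.
Combining the bounds gives
\[
\dim_{\F_q}E\leq(h-t)+t=h,
\]
which completes the argument.
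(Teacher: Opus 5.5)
Your proof is correct and follows the paper's argument essentially step by step. The paper uses the same projection onto the first $k_0$ coordinates, the same bound $t=\dim_F(H\cap\ker\pi)$ on the kernel via $\F_q$-independence implying $F$-independence, and the same reduction to $U_0$ being $j$-scattered for $j\leq h$; the only difference is that you prove this monotonicity directly by enlarging with vectors of $U_0$ (valid since $k_0>h$), whereas the paper cites \cite[Proposition~2.1]{CMPZ}.
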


\begin{proof}
The parametrization in \eqref{eq:graph-stabilization} is injective,
so $\dim_{\F_q}(U_\varphi)=n_0+s$.  Moreover, $U_\varphi$ contains
$\{0\}\times\F_q^s$, whose $F$-span is $\{0\}\times F^s$, and its
projection onto the first component is $U_0$.  Since $U_0$ spans
$F^{k_0}$, the $F$-span of $U_\varphi$ is $F^{k_0+s}$.

Let $L\leq F^{k_0+s}$ have dimension $h$, let
$V=\{0\}\times F^s$, and put $t=\dim_F(L\cap V)$.  The first
projection $\pi$ maps $L$ onto an $(h-t)$-dimensional $F$-subspace.
An $h$-scattered $q$-system is also $j$-scattered for every
$j\leq h$ by \cite[Proposition~2.1]{CMPZ}; hence
$$
 \dim_{\F_q}\pi(U_\varphi\cap L)
 \leq\dim_{\F_q}(U_0\cap\pi(L))\leq h-t.
$$
The kernel is contained in
$\{0\}\times(\F_q^s\cap(L\cap V))$.  Any vectors of $\F_q^s$
that are independent over $\F_q$ remain independent over $F$, because
matrix rank is preserved under the scalar extension $\F_q\subseteq F$.
Therefore this kernel has $\F_q$-dimension at most $t$.  Thus
$$
 \dim_{\F_q}(U_\varphi\cap L)\leq(h-t)+t=h.
$$
\end{proof}

If $1\leq s<(m-2)/2$, then
\begin{equation}\label{eq:graph-extremal-rank}
 \left\lfloor\frac{m(k_0+s)}{m-2}\right\rfloor
 =\ell m+s+\left\lfloor\frac{2s}{m-2}\right\rfloor
 =n_0+s.
\end{equation}
Consequently, every $U_\varphi$ in this range is extremal.  It is
quasi-maximum because
$m(k_0+s)\equiv2s\not\equiv0\pmod{m-2}$.  Taking $\varphi=0$
gives the elementary extension $U_0\oplus\F_q^s$.

We now isolate the additional condition that produces the strong
coding-theoretic properties.  Write the columns of $G_0$ as
$g_1,\ldots,g_{n_0}$ and let
$$
 \Phi=[\,\varphi(g_1)\ \cdots\ \varphi(g_{n_0})\,]
 \in F^{s\times n_0}.
$$
Then a basis matrix of $U_\varphi$ is
\begin{equation}\label{eq:graph-generator}
 G_\Phi=
 \begin{pmatrix}G_0&0\\ \Phi&I_s\end{pmatrix}.
\end{equation}
Put $C_\Phi=\operatorname{rowsp}_F(G_\Phi)$.  Direct orthogonality
gives
\begin{equation}\label{eq:graph-dual-code}
 D_\Phi:=C_\Phi^\perp
 =\{(x,-x\Phi^\top):x\in D_0\},
 \qquad
 H_\Phi=[\,H_0\mid -H_0\Phi^\top\,]
\end{equation}
as a generator matrix of $D_\Phi$.

Thus the coding operation is simply the column extension
$H_0\mapsto[\,H_0\mid Z\,]$, with $Z=-H_0\Phi^\top$; the graph
matrix records its dual $q$-system.  Maps whose matrices differ by
$LG_0$, with $L\in F^{s\times k_0}$, give the same $Z$ and are
removed by an $F$-linear shear of the graph coordinates.

Let $Z=-H_0\Phi^\top$ have columns $z_1,\ldots,z_s$, and set
$$
 \overline E_Z:=
 \langle z_1+W,\ldots,z_s+W\rangle_{\F_q}
 \leq F^{2\ell}/W.
$$
We shall use the following independence condition:
\begin{equation}\label{eq:independence-condition}
 \dim_{\F_q}\overline E_Z=s.
\end{equation}
Since the columns of $H_0$ form an $\F_q$-basis of $W$,
\eqref{eq:independence-condition} is equivalent to the
$\F_q$-linear independence of all columns of $H_\Phi$, and hence to
the nondegeneracy of $D_\Phi$.  Equivalently, if
$E_Z=\langle z_1,\ldots,z_s\rangle_{\F_q}$, then
$\dim_{\F_q}E_Z=s$ and $E_Z\cap W=\{0\}$.

Condition~\eqref{eq:independence-condition} can always be met in our
range.  Choose any $s$-dimensional subspace of $F^{2\ell}/W$, take a
basis, and lift its elements to columns $z_1,\ldots,z_s$ of $Z$.
Since $H_0$ has full row rank, one can solve
$-H_0\Phi^\top=Z$ for $\Phi$.

For explicit calculations, normalize
\begin{equation}\label{eq:normalized-initial-pair}
 H_0=[\,I_{2\ell}\mid P\,],
 \qquad
 G_0=[\,-P^\top\mid I_{k_0}\,].
\end{equation}
For any $Z\in F^{2\ell\times s}$, the choice
$\Phi=[\,-Z^\top\mid0\,]$ gives
\begin{equation}\label{eq:normalized-graph-pair}
 G_\Phi=
 \begin{pmatrix}
  -P^\top&I_{k_0}&0\\
  -Z^\top&0&I_s
 \end{pmatrix},
 \qquad
 H_\Phi=[\,I_{2\ell}\mid P\mid Z\,].
\end{equation}

Thus \eqref{eq:independence-condition} identifies exactly the column
extensions for which $D_\Phi$ is nondegenerate, while
\eqref{eq:normalized-graph-pair} displays the corresponding dual
graph systems explicitly.

\begin{theorem}\label{thm:graph}
Let $1\leq s<(m-2)/2$.  For every $\varphi$, the system $U_\varphi$
is extremal $(m-3)$-scattered and quasi-maximum.  Its associated code
$C_\Phi$ is nondegenerate, while $D_\Phi=C_\Phi^\perp$ is quasi-MRD
with parameters
$$
 [\ell m+s,2\ell,m-1]_{q^m/q}.
$$
Moreover,
\begin{equation}\label{eq:graph-weight-monotonicity}
 d_i(D_\Phi)\geq d_i(D_0)\qquad(1\leq i\leq2\ell).
\end{equation}
This inequality holds for every $\varphi$, independently of
\eqref{eq:independence-condition}.  If
\eqref{eq:independence-condition} holds, then both $C_\Phi$ and
$D_\Phi$ are nondegenerate quasi-MRD codes, with
\begin{equation}\label{eq:qMRD-parameters}
 \begin{split}
 C_\Phi&:\ [\ell m+s,\ell(m-2)+s,2]_{q^m/q},\\
 D_\Phi&:\ [\ell m+s,2\ell,m-1]_{q^m/q}.
 \end{split}
\end{equation}
If \eqref{eq:independence-condition} fails, then $d(C_\Phi)=1$.
\end{theorem}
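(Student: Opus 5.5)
The plan is to assemble the statement from Proposition~\ref{prop:graph-stabilization}, the rank computation \eqref{eq:graph-extremal-rank}, and a puncturing argument on $D_\Phi$.

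\textbf{Structural part.} Proposition~\ref{prop:graph-stabilization} shows that $U_\varphi$ is an $(m-3)$-scattered $[n_0+s,k_0+s]_{q^m/q}$ system. By \eqref{eq:graph-extremal-rank}, together with $m(k_0+s)\equiv 2s\not\equiv 0\pmod{m-2}$, it is extremal and quasi-maximum. Since $C_\Phi$ has generator matrix $G_\Phi$, whose columns are an $\F_q$-basis of $U_\varphi$, the code $C_\Phi$ is nondegenerate. Its dimension satisfies $h=m-3<k_0+s<n_0+s$; for $\ell\ge1$ this holds because $k_0+s<\ell m+s$. Theorem~\ref{thm:bridge} then gives $d(D_\Phi)\ge h+2=m-1$. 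From \eqref{eq:graph-dual-code} we have $\dim_F D_\Phi=\dim_F D_0=2\ell$.

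\textbf{Weight monotonicity.} Let $\pi$ be the puncturing onto the first $n_0$ coordinates. It restricts to an $F$-isomorphism $D_\Phi\to D_0$ sending $(x,-x\Phi^\top)\mapsto x$. For any $V\le D_\Phi$ of dimension $i$, the rank support of $\pi(V)$ is the image of $\operatorname{Rsupp}_q(V)$ under the coordinate projection $\F_q^{n_0+s}\to\F_q^{n_0}$, so $\dim\operatorname{Rsupp}_q(V)\ge\dim\operatorname{Rsupp}_q(\pi V)\ge d_i(D_0)$. Taking the minimum over $V$ gives \eqref{eq:graph-weight-monotonicity}, and no independence hypothesis is used. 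With $i=1$ and the singleton bound $d\le m$ this recovers $d(D_\Phi)\ge m-1$, consistent with the previous paragraph.

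\textbf{Quasi-MRD check.} This is where I expect the main difficulty: the definition of $\Rdef$ applies only for $n>m$, and the value $d=m-1$ must be matched exactly. For $D_\Phi$ we have $n=\ell m+s$ and $r=2\ell$. I will compute $\lceil 2\ell m/(\ell m+s)\rceil$. When $\ell\ge1$ and $s\ge1$, $n>m$ holds unless $\ell=1$ and $s=0$, which is excluded. Since $s<\ell m$, the quotient lies in $(1,2)$, so the ceiling is $2$. Hence $\Rdef=m-2-d+1$, which vanishes precisely when $d=m-1$. For the upper bound $d(D_\Phi)\le m-1$, I will invoke the Singleton bound \eqref{eq:singleton}: $2\ell m\le n(m-d+1)$. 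If $d=m$, this gives $2\ell m\le\ell m+s$, i.e.\ $\ell m\le s$, which is false. Therefore $d(D_\Phi)=m-1$ and $D_\Phi$ is quasi-MRD with the stated parameters.

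\textbf{$C_\Phi$ under \eqref{eq:independence-condition}.} Nondegeneracy of $D_\Phi$ is the stated equivalence. Its column system is then $1$-scattered of rank $n$ in $F^{2\ell}$, by Theorem~\ref{thm:bridge} applied in reverse to the pair $(D_\Phi,C_\Phi)$. Alternatively, and more directly, a rank-one word of $C_\Phi$ is exactly an $\F_q$-relation among the columns of $H_\Phi$, so $d(C_\Phi)\ge2$. For the quasi-MRD condition, set $n=\ell m+s$ and $r=k_0+s$. Then $mr/n=m-2\ell m/(\ell m+s)$, whose ceiling is $m-1$ by the same estimate as above. Hence $\Rdef=m-(m-1)-d+1=2-d$, and it suffices to show $d(C_\Phi)\le2$. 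Singleton gives $mr\le n(m-d+1)$. If $d=3$, this reads $m(\ell(m-2)+s)\le(\ell m+s)(m-2)$, i.e.\ $ms\le s(m-2)$, which is false. So $d(C_\Phi)=2$. Finally, if \eqref{eq:independence-condition} fails, the columns of $H_\Phi$ satisfy a nontrivial $\F_q$-relation $H_\Phi\lambda=0$ with $\lambda\in\F_q^{n}$. Then $\lambda^\top\in D_\Phi^\perp=C_\Phi$ is a nonzero word of rank one, so $d(C_\Phi)=1$.
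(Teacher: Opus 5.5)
Your proof is correct and follows the paper's argument. It uses puncturing onto the first $\ell m$ coordinates for \eqref{eq:graph-weight-monotonicity} and for $d(D_\Phi)\geq m-1$, the second term of the Singleton bound for $d(D_\Phi)\leq m-1$ and $d(C_\Phi)\leq2$, and the identification of rank-one words of $C_\Phi$ with $\F_q$-relations among the columns of $H_\Phi$. The only differences are that you read off nondegeneracy of $C_\Phi$ directly from $G_\Phi$ being a basis matrix of $U_\varphi$, whereas the paper deduces it from $d(D_\Phi)>1$, and that you also obtain $d(D_\Phi)\geq m-1$ from Theorem~\ref{thm:bridge}.

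Delete the aside claiming that the column system of $D_\Phi$ is $1$-scattered of rank $n$ in $F^{2\ell}$. It is false for two reasons:
\begin{itemize}
\item By Theorem~\ref{thm:bridge}, $1$-scatteredness would force $d(C_\Phi)\geq3$, contradicting the value $d(C_\Phi)=2$ that you prove.
\item A $1$-scattered system in $F^{2\ell}$ has rank at most $\ell m<\ell m+s$.
\end{itemize}
Your ``more direct'' rank-one-word argument, which is the paper's, is the one actually needed. One minor fix in the Singleton step for $C_\Phi$: rule out every $d\geq3$, not only $d=3$. The same computation does this, since the right-hand side decreases in $d$.
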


\begin{proof}
The geometric assertions follow from
Proposition~\ref{prop:graph-stabilization} and
\eqref{eq:graph-extremal-rank}.  Puncturing the last $s$ coordinates
maps $D_\Phi$ isomorphically onto $D_0$, so
$d(D_\Phi)\geq m-1$.  Since
$$
 1<\frac{2\ell m}{\ell m+s}<2,
$$
the second term of the Singleton bound gives $d(D_\Phi)\leq m-1$.
Thus equality holds and
$$
 \Rdef(D_\Phi)=m-2-(m-1)+1=0.
$$
For an $i$-dimensional subcode $V\leq D_\Phi$, puncturing produces
an $i$-dimensional subcode of $D_0$ whose rank support is a coordinate
projection of that of $V$.  This proves
\eqref{eq:graph-weight-monotonicity}.

The columns of $G_\Phi$ are $\F_q$-independent: otherwise a nonzero
$\F_q$-relation among them would be a rank-one word of $D_\Phi$,
contrary to $d(D_\Phi)=m-1>1$.  Hence $C_\Phi$ is nondegenerate and
its column system is $U_\varphi$.

If \eqref{eq:independence-condition} holds, a rank-one word of
$C_\Phi$ would, after scaling, be a nonzero $\F_q$-relation among
the columns of $H_\Phi$.  Therefore $d(C_\Phi)\geq2$.  The
Singleton bound gives $d(C_\Phi)\leq2$, and hence equality.  Since
$$
 \left\lceil
 \frac{m(\ell(m-2)+s)}{\ell m+s}
 \right\rceil=m-1,
$$
the code $C_\Phi$ has rank defect zero.  If
\eqref{eq:independence-condition} fails, a nonzero
$\F_q$-relation among the columns of $H_\Phi$ is a rank-one word of
$C_\Phi$, so $d(C_\Phi)=1$.
\end{proof}

We now count equivalence classes, rather than presentations by graph
maps.  Recall that
$$
 \qbinom{N}{s}
 =\prod_{i=0}^{s-1}\frac{q^{N-i}-1}{q^{s-i}-1}
$$
is the Gaussian coefficient.

Suppose that $Z$ satisfies \eqref{eq:independence-condition}, and let
$S_Z=W+E_Z$ be the $\F_q$-span of the columns of
$[\,H_0\mid Z\,]$.  At the level of column systems,
$S_Z$ depends only on the subspace
$\overline E_Z=S_Z/W\leq F^{2\ell}/W$.  Conversely, every
$s$-dimensional subspace of this quotient has a unique inverse image
$S$ containing $W$, and any basis of $S/W$, after choosing lifts,
produces $S$.  Thus changes of basis and of lifts are not counted as
distinct extensions.

\begin{theorem}\label{thm:orbit-count}
Fix a maximum scattered $q$-system $W\leq F^{2\ell}$ and let
$1\leq s<(m-2)/2$.  Among the extensions satisfying
\eqref{eq:independence-condition}, the construction produces at least
\begin{equation}\label{eq:orbit-lower-bound}
 \left\lceil
 \frac{\qbinom{\ell m}{s}}
 {em\,|\GL(2\ell,F)|}
 \right\rceil
\end{equation}
pairwise inequivalent extremal $(m-3)$-scattered $q$-systems.
\end{theorem}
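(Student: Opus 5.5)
The plan is to pass from the systems $U_\varphi$ to their dual column systems $S_Z\leq F^{2\ell}$, which are parametrized exactly by subspaces of the quotient, and then bound the number of orbits by counting. First I will count the parameter set. The quotient $F^{2\ell}/W$ has $\F_q$-dimension $2\ell m-\ell m=\ell m$, so it has exactly $\qbinom{\ell m}{s}$ subspaces of dimension $s$. By the discussion preceding the theorem, each such subspace $\overline E$ has a unique inverse image $S\supseteq W$, and $S=S_Z$ for any $Z$ whose columns lift a basis of $\overline E$. The map $\overline E\mapsto S$ is therefore injective. Distinct subspaces thus give $\qbinom{\ell m}{s}$ pairwise distinct $\F_q$-subspaces $S_Z\leq F^{2\ell}$ of rank $\ell m+s$. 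Each $S_Z$ is the column system of the nondegenerate code $D_\Phi$ from Theorem~\ref{thm:graph}, with generator matrix $H_\Phi=[\,H_0\mid Z\,]$.

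Next I will transfer equivalence from the $U_\varphi$ side to the $S_Z$ side. Suppose $U_\varphi$ and $U_{\varphi'}$ are equivalent, both satisfying \eqref{eq:independence-condition}. By the code--system correspondence \cite[Theorem~2.15]{MNT}, the nondegenerate codes $C_\Phi$ and $C_{\Phi'}$ are rank-metric equivalent, say $C_{\Phi'}=\sigma(C_\Phi)A$. Lemma~\ref{lem:equivalence-duality} then gives $D_{\Phi'}=\sigma(D_\Phi)A^{-\top}$. Both dual codes are nondegenerate by Theorem~\ref{thm:graph}, so applying the correspondence once more shows that their column systems are equivalent: $S_{Z'}=B(\sigma(S_Z))$ for some $B\in\GL(2\ell,F)$ and $\sigma\in\operatorname{Aut}(F)$. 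Consequently, the number of equivalence classes among the $U_\varphi$ is at least the number of orbits of
$$
\Gamma=\{x\mapsto B\sigma(x):B\in\GL(2\ell,F),\ \sigma\in\operatorname{Aut}(F)\}
$$
acting on subspaces of $F^{2\ell}$ and meeting the family $\{S_Z\}$.

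Finally I will bound the orbit count. The set $\Gamma$ is a group of transformations, namely the image of $\GL(2\ell,F)\rtimes\operatorname{Aut}(F)$, so $|\Gamma|\leq em\,|\GL(2\ell,F)|$. Each $\Gamma$-orbit therefore contains at most $em\,|\GL(2\ell,F)|$ of the distinct spaces $S_Z$. Since there are $\qbinom{\ell m}{s}$ distinct $S_Z$, they occupy at least $\qbinom{\ell m}{s}/(em\,|\GL(2\ell,F)|)$ orbits. The number of orbits is an integer, so we may take the ceiling, which is \eqref{eq:orbit-lower-bound}. Each representative is extremal $(m-3)$-scattered by Theorem~\ref{thm:graph}.

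I expect the main obstacle to be the second step. One must check carefully that the span of the columns of $H_\Phi$ is literally $S_Z$, not merely something equivalent to it, so that distinctness in the first step is not lost. One must also check that equivalence of the $U_\varphi$ really yields equivalence of the $S_Z$ under the same semilinear group, rather than under a coarser relation. This is where nondegeneracy of both $C_\Phi$ and $D_\Phi$ is essential, and it is exactly why the count is restricted to extensions satisfying \eqref{eq:independence-condition}.
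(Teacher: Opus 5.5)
Your proposal is correct and follows the paper's proof essentially step for step. The paper likewise identifies the $\qbinom{\ell m}{s}$ intermediate spaces $W\leq S\leq F^{2\ell}$ of rank $\ell m+s$ with the column systems of $[\,H_0\mid Z\,]$. It then transfers equivalence of the graph systems to semilinear equivalence of these column systems via the code--system correspondence and Lemma~\ref{lem:equivalence-duality}, and divides by the order $em\,|\GL(2\ell,F)|$. Your phrasing in terms of orbits meeting the family $\{S_Z\}$ correctly handles the point, also noted in the paper, that this family need not be invariant under the full semilinear group.
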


\begin{proof}
Let $\mathcal X_s(W)$ be the set of $\F_q$-subspaces $S$ such that
$$
 W\leq S\leq F^{2\ell},
 \qquad \dim_{\F_q}S=\ell m+s.
$$
The quotient $F^{2\ell}/W$ has $\F_q$-dimension $\ell m$, and hence
$|\mathcal X_s(W)|=\qbinom{\ell m}{s}$.  Every $S$ in this set spans
$F^{2\ell}$ over $F$, since it contains $W$.  Choose a basis of
$S/W$, lift it to columns $Z$, and append these columns to $H_0$.
Then \eqref{eq:independence-condition} holds and
$[\,H_0\mid Z\,]$ has column system $S$, so every member of
$\mathcal X_s(W)$ occurs in the construction.

If two output graph systems are equivalent, the code--system
correspondence and Lemma~\ref{lem:equivalence-duality} imply that
their column systems $S$ are semilinearly equivalent.  The group
$\GL(2\ell,F)\rtimes\operatorname{Aut}(F)$ has order
$em\,|\GL(2\ell,F)|$, so each equivalence class contains at most this
many members of $\mathcal X_s(W)$.  Dividing gives
\eqref{eq:orbit-lower-bound}.  We deliberately divide by the full
semilinear group, rather than analyze the stabilizer of $W$; the
estimate is uniform and asymptotically nontrivial, although it does
not exploit the stabilizer of $W$.
The set $\mathcal X_s(W)$ need not be invariant under the full group;
this causes no problem, since only the universal upper bound on the
size of its intersection with an equivalence class is used.
\end{proof}

\begin{corollary}\label{cor:orbit-asymptotic}
Fix $q$ and $\ell\geq1$.  For $m\geq5$, put
$s_m=\lfloor(m-3)/2\rfloor$.  The lower bound in
Theorem~\ref{thm:orbit-count}, and hence the number of inequivalent
systems produced for each $m$, tends to infinity as $m\to\infty$.
\end{corollary}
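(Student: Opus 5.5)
We need to show that the quantity
$$
 \frac{\qbinom{\ell m}{s_m}}{em\,|\GL(2\ell,F)|}
$$
tends to infinity as $m\to\infty$, with $q$ and $\ell$ fixed. The plan is to compare $q$-exponents. First note that $s_m=\lfloor(m-3)/2\rfloor$ satisfies $1\leq s_m<(m-2)/2$ for $m\geq5$, so Theorem~\ref{thm:orbit-count} applies for each such $m$. It therefore suffices to show that the logarithm to base $q$ of the displayed ratio diverges.

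For the numerator, use the standard lower bound $\qbinom{N}{s}\geq q^{s(N-s)}$. This holds because each factor satisfies
$$
 \frac{q^{N-i}-1}{q^{s-i}-1}\geq q^{N-s},
$$
which is equivalent to $q^{N-i}-1\geq q^{N-i}-q^{N-s}$, true since $N\geq s$. With $N=\ell m$ and $s=s_m\geq (m-4)/2$, we get
$$
 s_m(\ell m-s_m)\geq \frac{m-4}{2}\Bigl(\ell m-\frac{m}{2}\Bigr)
 =\frac{(m-4)(2\ell-1)m}{4}.
$$
This is quadratic in $m$ with positive leading coefficient $(2\ell-1)/4$, since $\ell\geq1$.

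For the denominator, $|\GL(2\ell,F)|\leq |F|^{(2\ell)^2}=q^{4\ell^2 m}$, which is linear in $m$ in the exponent. Also $em\leq q^{m}$ for large $m$, or crudely $e m\le q^{em}$ with $e$ fixed since $q$ is fixed. Hence
$$
 \log_q\!\left(\frac{\qbinom{\ell m}{s_m}}{em\,|\GL(2\ell,F)|}\right)
 \geq \frac{(2\ell-1)m(m-4)}{4}-4\ell^2m-\log_q(em),
$$
and the right-hand side tends to infinity because the quadratic term dominates. The ceiling in \eqref{eq:orbit-lower-bound} only increases the quantity, so the lower bound tends to infinity. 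By Theorem~\ref{thm:orbit-count}, so does the number of pairwise inequivalent systems produced.

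The only point needing care is that $2\ell-1>0$ and $s_m\leq \ell m/2$, which keeps the quadratic coefficient positive; the rest is routine.
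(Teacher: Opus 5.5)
Your numerical argument is the paper's own. The paper also uses $\qbinom{\ell m}{s_m}\geq q^{s_m(\ell m-s_m)}$ and $|\GL(2\ell,F)|<q^{4\ell^2m}$, and notes that the resulting exponent has leading term $(2\ell-1)m^2/4$. Your check of the Gaussian-coefficient bound and of $s_m(\ell m-s_m)\geq(m-4)(2\ell-1)m/4$ is correct. So the first half of the statement, that the quantity in Theorem~\ref{thm:orbit-count} diverges, is fully proved.

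There is one omitted step in the ``hence'' part. Theorem~\ref{thm:orbit-count} concerns a \emph{fixed} maximum scattered $q$-system $W\leq F^{2\ell}$ of rank $\ell m$. You checked its numerical hypothesis $1\leq s_m<(m-2)/2$, but not that such a $W$ exists for every $m$. Without that, the construction produces nothing for that $m$, and the conclusion about the number of inequivalent systems is vacuous.

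The paper closes this by exhibiting $W=\{(x_1,\ldots,x_\ell,x_1^q,\ldots,x_\ell^q):x_i\in F\}$:
\begin{itemize}
\item it has rank $\ell m$;
\item it spans $F^{2\ell}$, because each graph $\{(x,x^q)\}$ spans $F^2$ when $m>1$;
\item it is scattered: if $v\neq0$ and $\lambda v$ both lie in $W$, a coordinate with $x_i\neq0$ gives $\lambda x_i^q=(\lambda x_i)^q$, so $\lambda^q=\lambda$.
\end{itemize}
Adding this sentence completes your proof.
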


\begin{proof}
Maximum scattered systems exist for every $m>1$; for instance one may
take
$$
 \{(x_1,\ldots,x_\ell,x_1^q,\ldots,x_\ell^q):
     x_1,\ldots,x_\ell\in F\}.
$$
Indeed, on every nonzero $F$-line, two nonzero vectors of this set
can differ only by a scalar $\lambda$ satisfying
$\lambda^q=\lambda$, while each coordinate graph spans $F^2$.
Moreover,
$$
 \qbinom{\ell m}{s_m}\geq q^{s_m(\ell m-s_m)},
 \qquad
 |\GL(2\ell,F)|<q^{4\ell^2m}.
$$
Therefore the lower bound in \eqref{eq:orbit-lower-bound} is at least
$$
 \frac{q^{s_m(\ell m-s_m)-4\ell^2m}}{em},
$$
whose exponent has leading term $(2\ell-1)m^2/4$.
\end{proof}

We next compare the second generalized rank weight of $D_\Phi$ with
a precisely specified direct-sum class.  We say that a code $D$ has
a \emph{nontrivial direct-sum decomposition} if it is rank-metric
equivalent to $D_1\oplus D_2$ on two disjoint nonempty coordinate
blocks, where both $D_1$ and $D_2$ have positive dimension.  A
decomposition into more than two positive-dimensional factors can be
grouped into one of this form.  This notion is broader than, and
should not be confused with, complete decomposability in the sense of
\cite{Santonastaso}, where all factors are one-dimensional MRD codes.

Let $\mathbf{s}=(s_1,\ldots,s_\ell)$ be a tuple of nonnegative
integers with $s_1+\cdots+s_\ell=s$.  An \emph{order-one
direct-sum code} is
\begin{equation}\label{eq:order-one-block-code}
 D_{\mathbf{s}}=D_{s_1}\oplus\cdots\oplus D_{s_\ell}
\end{equation}
on disjoint coordinate blocks, where every factor is a nondegenerate
$$
 [m+s_i,2,m-1]_{q^m/q}
$$
code.  These factors exist: use the Gabidulin code when $s_i=0$ and,
when $s_i>0$, an extension from Theorem~\ref{thm:graph} with
$\ell=1$ satisfying \eqref{eq:independence-condition}.  The code
$D_{\mathbf{s}}$ is nondegenerate with parameters
$[\ell m+s,2\ell,m-1]_{q^m/q}$.

\begin{proposition}
\label{prop:direct-sum-separation}
Assume $\ell\geq2$ and $1\leq s<(m-2)/2$.  Then every code in
\eqref{eq:order-one-block-code} satisfies
\begin{equation}\label{eq:block-d2}
 d_2(D_{\mathbf{s}})=m+\min_i s_i
 \leq m+\left\lfloor\frac{s}{\ell}\right\rfloor.
\end{equation}

Let $D_0$ be the MRD code associated with $W$, let $\varphi$ satisfy
\eqref{eq:independence-condition}, and put $D=D_\Phi$.  If
\begin{equation}\label{eq:separation-criterion}
 d_2(D_0)>m+\left\lfloor\frac{s}{\ell}\right\rfloor,
\end{equation}
then $U_\varphi$ is not equivalent to any $q$-system associated with
$D_{\mathbf{s}}^\perp$.  The weaker inequality $d_2(D_0)>m$ gives
the same conclusion for every tuple $\mathbf{s}$ having at least one
$s_i=0$.
\end{proposition}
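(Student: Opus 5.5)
First I compute $d_2$ of the block code. A $2$-dimensional subcode $V\leq D_{\mathbf s}$ either lies inside a single factor $D_{s_i}$, or it does not. In the first case $V=D_{s_i}$, because that factor is $2$-dimensional. Since $D_{s_i}$ is nondegenerate with $d_r=n$ (the fact quoted after Theorem~\ref{thm:bridge}), its rank support has dimension $m+s_i$.

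In the second case, let $\pi_i$ denote the projections onto the blocks. Two projections are nonzero, say $\pi_i(V)$ and $\pi_j(V)$ with $i\neq j$. The rank support of $V$ is the direct sum of the supports of the projections $\pi_i(V)$ over the blocks. Each nonzero projection contains a nonzero word of its factor, so its support has dimension at least $m-1$. Hence $\dim\operatorname{Rsupp}_q(V)\geq 2(m-1)$.

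By hypothesis $s<(m-2)/2$, so $s_i\leq s\leq m-3$ and $m+s_i\leq 2m-3<2m-2$. Taking minima over both cases gives $d_2(D_{\mathbf s})=m+\min_i s_i$. The bound $\min_i s_i\leq\lfloor s/\ell\rfloor$ holds because the $s_i$ are integers with sum $s$.

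For the separation, I argue by contradiction. Suppose $U_\varphi$ is equivalent to a $q$-system associated with $D_{\mathbf s}^\perp$. The dual code $D_{\mathbf s}^\perp$ is nondegenerate: its generator matrix columns are independent because $d(D_{\mathbf s})=m-1>1$, as noted before Theorem~\ref{thm:bridge}. By the code--system correspondence \cite[Theorem~2.15]{MNT}, $C_\Phi$ is then rank-metric equivalent to $D_{\mathbf s}^\perp$. By Lemma~\ref{lem:equivalence-duality}, $D_\Phi$ is equivalent to $D_{\mathbf s}$, and therefore $d_2(D_\Phi)=d_2(D_{\mathbf s})$.

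Theorem~\ref{thm:graph}, specifically \eqref{eq:graph-weight-monotonicity}, gives $d_2(D_\Phi)\geq d_2(D_0)$. Combining this with \eqref{eq:separation-criterion} and \eqref{eq:block-d2} yields
\[
d_2(D_{\mathbf s})=d_2(D_\Phi)\geq d_2(D_0)>m+\lfloor s/\ell\rfloor\geq d_2(D_{\mathbf s}),
\]
which is a contradiction. If some $s_i=0$, then \eqref{eq:block-d2} gives $d_2(D_{\mathbf s})=m$, so the weaker hypothesis $d_2(D_0)>m$ already produces the same contradiction.

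The main obstacle is making the correspondence step precise. The code--system correspondence matches equivalence classes of nondegenerate codes with those of systems. Here $U_\varphi$ is the column system of $C_\Phi$, which is nondegenerate by Theorem~\ref{thm:graph}, so equivalence of systems transfers to equivalence of the codes $C_\Phi$ and $D_{\mathbf s}^\perp$, and then to their duals by Lemma~\ref{lem:equivalence-duality}. I also need to check the support additivity across blocks, which holds because the blocks use disjoint sets of coordinates.
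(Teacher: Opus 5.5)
Your separation step is correct and is the paper's argument: nondegeneracy of $D_{\mathbf s}^\perp$, transfer of equivalence from $U_\varphi$ to the codes $C_\Phi$ and $D_{\mathbf s}^\perp$, dualization by Lemma~\ref{lem:equivalence-duality}, and the chain through \eqref{eq:graph-weight-monotonicity}. The gap is in the lower bound for $d_2(D_{\mathbf s})$.

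In your second case you assert that $\operatorname{Rsupp}_q(V)$ is the direct sum of the supports of the projections $\pi_i(V)$. Disjointness of the blocks only gives $\operatorname{Rsupp}_q(V)\subseteq\bigoplus_i\operatorname{Rsupp}_q(\pi_i(V))$, with each block projection surjective. Equality needs $V$ to be the full external direct sum of its projections, which a $2$-dimensional subcode generally is not.

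The resulting bound $2(m-1)$ is in fact false in your second case. Suppose two blocks carry the same factor $E$, which the definition allows; for example $\ell=3$, $\mathbf s=(s,0,0)$, with one Gabidulin code in the last two blocks. Then $V=\{(0,x,x):x\in E\}$ lies in no single factor. Yet $\operatorname{Rsupp}_q(V)=\{(0,r,r):r\in\F_q^{m}\}$ has dimension $m<2(m-1)$. So your dichotomy leaves the lower bound unproved for subcodes that project isomorphically onto a factor without lying in it. Your closing remark that additivity holds ``because the blocks use disjoint sets of coordinates'' is exactly where this slips through.

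The repair, which is what the paper does, is to split by the dimensions of the projections rather than by containment in a factor.

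\emph{Some $\pi_i(V)$ is $2$-dimensional.} Then $\pi_i(V)=D_{s_i}$. Puncturing only projects the rank support, so $\dim_{\F_q}\operatorname{Rsupp}_q(V)\geq m+s_i\geq m+\min_j s_j$. This is all that is needed, not $2(m-1)$.

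\emph{Every $\pi_i(V)$ has dimension at most one.} The kernels of the maps $\pi_i|_V$ are lines or all of $V$, with trivial common intersection. Hence there are blocks $a\neq b$ for which $V\to\pi_a(V)\oplus\pi_b(V)$ is an isomorphism. Puncturing to these two blocks produces the full external sum $\pi_a(V)\oplus\pi_b(V)$. There, additivity of supports is legitimate and gives at least $2(m-1)$.

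Even in this subcase you cannot simply add over all nonzero projections. Take nonzero codewords $a,b,c$ of three factors. Then $\langle(a,b,0),(0,b,c)\rangle_F$ has support of dimension at most $2m$, while the three projection supports sum to at least $3(m-1)>2m$. This is why two blocks must be selected.
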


\begin{proof}
Choose $j$ with $s_j=\min_i s_i$.  Since $D_{s_j}$ is
two-dimensional and nondegenerate, its second generalized rank
weight is its length, namely $m+s_j$.  Viewing this factor as a
subcode of $D_{\mathbf{s}}$ gives
$d_2(D_{\mathbf{s}})\leq m+s_j$.

For the reverse inequality, let $V\leq D_{\mathbf{s}}$ have
$F$-dimension two and denote its projection onto the $i$-th block by
$V_i$.  If $\dim_F V_i=2$ for some $i$, puncturing to that block
 gives
$$
 \dim_{\F_q}\operatorname{Rsupp}_q(V)
 \geq d_2(D_{s_i})=m+s_i\geq m+s_j.
$$
Otherwise all the $V_i$ have dimension at most one.  Since the joint
block-projection map is injective on $V$, the kernels of its component
maps have trivial intersection.  Hence two components, say those in
blocks $a$ and $b$, already give a map $V\to V_a\oplus V_b$ of rank
two.  This map is an isomorphism, so puncturing to these two blocks
gives $V_a\oplus V_b$.  Because the two coordinate blocks are
disjoint,
$$
 \dim_{\F_q}\operatorname{Rsupp}_q(V_a\oplus V_b)
 =\dim_{\F_q}\operatorname{Rsupp}_q(V_a)
  +\dim_{\F_q}\operatorname{Rsupp}_q(V_b)
 \geq2(m-1),
$$
where the equality is valid because the projected subcode is the full
external product $V_a\oplus V_b$, not merely an arbitrary subcode of
a concatenation.  The last inequality follows from
$d(D_{s_a})=d(D_{s_b})=m-1$.
Since $s_j\leq s<(m-2)/2<m-2$, we have
$m+s_j<2(m-1)$.  Thus every such $V$ has rank support at least
$m+s_j$, proving the equality in \eqref{eq:block-d2}.  The displayed
upper bound follows from
$\min_i s_i\leq\lfloor s/\ell\rfloor$.

By \eqref{eq:graph-weight-monotonicity},
$d_2(D)\geq d_2(D_0)$.  Under
\eqref{eq:separation-criterion}, this is strictly larger than
$d_2(D_{\mathbf{s}})$ for every tuple $\mathbf{s}$.  If the output
$q$-system were equivalent to the dual system of some
$D_{\mathbf{s}}$, the code--system correspondence and
Lemma~\ref{lem:equivalence-duality} would make $D$ equivalent to
$D_{\mathbf{s}}$, contradicting invariance of $d_2$.  Finally, if
some $s_i=0$, then \eqref{eq:block-d2} gives
$d_2(D_{\mathbf{s}})=m$, so the same argument applies under the
weaker assumption $d_2(D_0)>m$.
\end{proof}

\begin{proposition}\label{prop:all-decompositions-order-two}
Let $1\leq s<(m-2)/2$, and let $D$ be a nondegenerate
$[2m+s,4,m-1]_{q^m/q}$ code.  If $D$ has a nontrivial direct-sum
decomposition, then it is rank-metric equivalent to
$$
 D_{s_1}\oplus D_{s_2},
 $$
where $s_1,s_2\geq0$, $s_1+s_2=s$, and each $D_{s_i}$ is a
nondegenerate $[m+s_i,2,m-1]_{q^m/q}$ code.  Consequently, if
\begin{equation}\label{eq:full-direct-sum-separation}
 d_2(D)>m+\left\lfloor\frac{s}{2}\right\rfloor,
\end{equation}
then $D$ is not rank-metric equivalent to any nontrivial direct sum.
\end{proposition}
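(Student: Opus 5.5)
The plan is a case analysis on the dimensions of the two factors, using the minimum distance and the Singleton bound~\eqref{eq:singleton} on each factor separately.

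\textbf{Setup.} Suppose $D$ is rank-metric equivalent to $D_1\oplus D_2$ on disjoint coordinate blocks of lengths $n_1,n_2\geq1$. Here $n_1+n_2=2m+s$, $r_i=\dim_F D_i\geq1$ and $r_1+r_2=4$.
\begin{itemize}
\item Equivalence by $\sigma$ and $A\in\GL(n,q)$ preserves nondegeneracy. A generator matrix of the direct sum is block diagonal, so $\F_q$-independence of all its columns gives $\F_q$-independence of the columns of each block. Hence each $D_i$ is nondegenerate.
\item Minimum distance is invariant under equivalence. Each $D_i$ embeds isometrically as a subcode of $D_1\oplus D_2$. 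Hence $d(D_i)\geq m-1$.
\item The second term of~\eqref{eq:singleton} gives $r_im\leq n_i(m-d(D_i)+1)\leq 2n_i$, so $n_i\geq r_im/2$.
\end{itemize}

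\textbf{Case $\{r_1,r_2\}=\{1,3\}$.} Say $r_1=1$ and $r_2=3$. Then $n_1\geq m-1$, because a nonzero word of the one-dimensional factor has weight at most $n_1$ and at least $d(D_1)\geq m-1$. Also $n_2\geq 3m/2$. Therefore
$$
2m+s=n_1+n_2\geq\frac{5m}{2}-1.
$$
This contradicts $s<(m-2)/2$, which is equivalent to $2m+s<5m/2-1$. So this case cannot occur.

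\textbf{Case $r_1=r_2=2$.} The bound above gives $n_i\geq m$, so write $n_i=m+s_i$ with $s_i\geq0$ and $s_1+s_2=s$.
\begin{itemize}
\item If $d(D_i)=m$, Singleton gives $2m\leq n_i$. But $n_i\leq m+s<2m$, a contradiction. Hence $d(D_i)=m-1$ exactly.
\item Each $D_i$ is therefore a nondegenerate $[m+s_i,2,m-1]_{q^m/q}$ code, which is the claimed form.
\end{itemize}

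\textbf{Consequence.} The computation of $d_2$ in the proof of Proposition~\ref{prop:direct-sum-separation} applies verbatim with $\ell=2$. It used only that the factors are nondegenerate two-dimensional codes of minimum distance $m-1$, together with $s<(m-2)/2$. It gives
$$
d_2(D_{s_1}\oplus D_{s_2})=m+\min\{s_1,s_2\}\leq m+\lfloor s/2\rfloor.
$$
By Lemma~\ref{lem:equivalence-duality}, $d_2$ is invariant under rank-metric equivalence. So~\eqref{eq:full-direct-sum-separation} excludes every nontrivial direct-sum decomposition of $D$.

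\textbf{Main obstacle.} The only delicate point is the unbalanced split $\{1,3\}$. It is ruled out exactly by the numerical hypothesis $s<(m-2)/2$, so the inequality chain there has to be kept precise. Everything else is bookkeeping.
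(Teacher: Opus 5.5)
Your proof is correct and follows the paper's argument essentially step by step. You exclude the $\{1,3\}$ split by the same length count, pin down lengths $m+s_i$ and distance $m-1$ in the $(2,2)$ case as the paper does, and appeal to the $d_2$ computation of Proposition~\ref{prop:direct-sum-separation} with $\ell=2$. The only cosmetic difference is that you get $n_1\geq m-1$ from the trivial bound $n_1\geq d(D_1)$ and $n_i\geq m$ from the second Singleton term, where the paper uses the first Singleton term $n_i\geq m+r_i-2$.
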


\begin{proof}
Suppose that $D$ is equivalent to $D_1\oplus D_2$, and write
$$
 r_i=\dim_F D_i,\qquad n_i=\text{length}(D_i)\qquad(i=1,2).
$$
Thus $r_1+r_2=4$ and $n_1+n_2=2m+s$.  Since the minimum distance of
a direct sum is the minimum of the distances of its factors, we have
$d(D_i)\geq m-1$ for both $i$.  The two terms of the Singleton
bound~\eqref{eq:singleton} therefore imply
\begin{equation}\label{eq:factor-length-bounds}
 n_i\geq m+r_i-2,
 \qquad
 n_i\geq\left\lceil\frac{mr_i}{2}\right\rceil.
\end{equation}
Nondegeneracy is preserved by rank-metric equivalence, and a direct
sum is nondegenerate precisely when each factor is nondegenerate.

Up to interchanging the factors, the possible dimension splits are
$(r_1,r_2)=(1,3)$ and $(2,2)$.  In the first case,
\eqref{eq:factor-length-bounds} gives
$$
 n_1+n_2\geq m-1+\left\lceil\frac{3m}{2}\right\rceil.
$$
On the other hand, $s<(m-2)/2$ gives
$$
 2m+s<\frac{5m}{2}-1
 \leq m-1+\left\lceil\frac{3m}{2}\right\rceil,
$$
a contradiction.  Hence $r_1=r_2=2$.  The first inequality in
\eqref{eq:factor-length-bounds} gives $n_i\geq m$, so we may write
$n_i=m+s_i$ with $s_i\geq0$ and $s_1+s_2=s$.

It remains to determine the distances of the two factors.  If
$d(D_i)\geq m$, the second term of the Singleton bound would give
$$
 2m\leq n_i\bigl(m-d(D_i)+1\bigr)\leq n_i.
$$
But the other factor has length at least $m$, and hence
$$
 n_i\leq m+s<2m,
$$
which is impossible.  Thus $d(D_i)=m-1$ for $i=1,2$, proving the
first assertion.

Every possible nontrivial decomposition therefore belongs to the
class in Proposition~\ref{prop:direct-sum-separation} with
$\ell=2$.  Its second generalized rank weight is at most
$m+\lfloor s/2\rfloor$, so \eqref{eq:full-direct-sum-separation}
excludes all such decompositions.
\end{proof}

\section{Extensions of the Bartoli--Marino--Neri--Vicino systems}
\label{sec:BMNV}

We now specialize Theorem~\ref{thm:graph} to a concrete family of
maximum scattered systems in $F^4$.

Following the range stated in \cite[Definition~3.1]{BMNV}, let
$I,J$ be distinct integers with
$$
 1\leq I,J<m-1,
$$
and let
$\alpha,\beta,\gamma\in F^*$.  Define
\begin{equation}\label{eq:BMNV-W}
 W_{\alpha,\beta,\gamma}^{I,J}
 =\{(x,y,x^{q^I}+\alpha y^{q^J},
           x^{q^J}+\beta y^{q^I}+\gamma y^{q^J}):x,y\in F\}.
\end{equation}
Associated with these parameters is the projective polynomial
\begin{equation}\label{eq:BMNV-P}
 P_{\alpha,\beta,\gamma}^{I,J}(X)=
 \begin{cases}
 X^{q^{J-I}+1}+\gamma X-\alpha\beta,&I<J,\\
 X^{q^{I-J}+1}+\gamma X^{q^{I-J}}-\alpha\beta,&I>J.
 \end{cases}
\end{equation}

We use the following results of \cite[Theorems~3.2 and~3.5,
Remark~3.8]{BMNV}.

\begin{theorem}\label{thm:BMNV}
Assume
$$
 \gcd(I,J,m)=1
$$
and that $P_{\alpha,\beta,\gamma}^{I,J}$ has no root in $F$.
Then $W_{\alpha,\beta,\gamma}^{I,J}$ is a maximum scattered
$[2m,4]_{q^m/q}$ $q$-system.  If $D_0$ is an associated code and
$M=\max\{I,J\}\leq(m-1)/2$, then
\begin{equation}\label{eq:BMNV-d2}
 d_2(D_0)\geq2(m-M)\geq m+1.
\end{equation}
\end{theorem}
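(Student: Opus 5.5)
The statement is quoted from \cite[Theorems~3.2 and~3.5, Remark~3.8]{BMNV}, so the proof will mostly be a reduction to those results, plus an independent check of the quantitative bound \eqref{eq:BMNV-d2}. I would argue in three steps.

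\emph{Step 1: scatteredness.} Take a nonzero $F$-line $\langle v\rangle_F$ and suppose two vectors of $W_{\alpha,\beta,\gamma}^{I,J}$ on it are related by a scalar $\lambda\in F\setminus\F_q$. Write them as $(x,y,\dots)$ and $(\lambda x,\lambda y,\dots)$. Comparing the last two coordinates gives two semilinear equations in $x,y$. Eliminating $x$, after raising to suitable $q$-powers, should lead to a relation of the form $P_{\alpha,\beta,\gamma}^{I,J}(\mu)=0$, where $\mu$ is a quotient built from $\lambda$, $x$ and $y$. The hypothesis that $P$ has no root in $F$ excludes this. The degenerate subcases $x=0$ or $y=0$ are handled separately, using $\gcd(I,J,m)=1$ to force $\lambda^{q^I}=\lambda^{q^J}=\lambda$ and hence $\lambda\in\F_q$. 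The rank is $2m$, and $W$ spans $F^4$, so $W$ is maximum scattered.

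\emph{Step 2: reformulating $d_2$.} For an associated code $D_0$, I would use the standard geometric description
\[
d_2(D_0)=2m-\max_{\dim_F \Pi=2}\dim_{\F_q}(W\cap \Pi),
\]
where $\Pi$ ranges over $F$-subspaces of codimension $2$, which here are again planes in $F^4$. With this formula, \eqref{eq:BMNV-d2} becomes the statement that every $F$-plane meets $W$ in dimension at most $M$.

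\emph{Step 3: bounding plane intersections.} An $F$-plane is cut out by two $F$-linear equations. Substituting the parametrization \eqref{eq:BMNV-W} turns these into a system of two linearized polynomial equations in $(x,y)$, with $q$-degrees bounded by $q^M$. Eliminating one variable gives a single linearized polynomial of $q$-degree at most $M$ in the remaining variable; one has to check that it is nonzero, and this again uses that $P$ has no root. Its kernel then has $\F_q$-dimension at most $M$, which gives $d_2\geq 2m-M$, and this is at least $2(m-M)$. The second inequality $2(m-M)\geq m+1$ follows immediately from $M\leq (m-1)/2$.

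The main obstacle is the elimination in Step 3. One must avoid the elimination collapsing to the zero polynomial, and must control the degree when $I$ and $J$ interact. If the direct argument becomes messy, I would simply cite \cite[Theorem~3.5, Remark~3.8]{BMNV}, as the paper does.
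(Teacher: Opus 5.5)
The appeal to \cite{BMNV} matches what the paper does: it gives no proof of its own and only cites \cite[Theorems~3.2 and~3.5, Remark~3.8]{BMNV}. Your independent argument in Steps~2--3, however, contains a genuine error. The formula $d_2(D_0)=2m-\max_{\Pi}\dim_{\F_q}(W\cap\Pi)$, with $\Pi$ ranging over $F$-planes in $F^4$, is correct. But with it, \eqref{eq:BMNV-d2} is equivalent to $\dim_{\F_q}(W\cap\Pi)\leq 2M$ for every plane, not $\leq M$. The bound you aim for in Step~3 would give $d_2(D_0)\geq 2m-M$, and that is false under the hypotheses of the theorem.

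The paper's own binary example shows this. Take $q=2$, $m=5$, $I=1$, $J=2$ and $\alpha=\beta=\gamma=1$, so $M=2$; the paper checks that these satisfy all hypotheses. Let $\Pi=\{X_3=X_2,\ X_4=X_1+X_2\}$, the plane attached to the subcode $\langle v_1,v_2\rangle_F$ in the proof of Corollary~\ref{cor:binary}. A point $(x,y,x^2+y^4,x^4+y^2+y^4)$ of $W$ lies in $\Pi$ if and only if
\[
x^2=y^4+y \quad\text{and}\quad x^4+y^4+y^2=x+y .
\]
Substituting $x^4=y^8+y^2$ into the second equation gives $x=y^8+y^4+y$. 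The first equation then becomes $y^{16}+y^8+y^4+y^2+y=0$, that is, $\operatorname{Tr}_{F/\F_2}(y)=0$, and conversely every such $y$ gives a solution. Hence $\dim_{\F_2}(W\cap\Pi)=4=2M$, so $d_2(D_0)\leq 6<8=2m-M$. The bound $2(m-M)$ is attained.

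The same computation shows where Step~3 breaks. Eliminating one variable from two linearized equations of $q$-degree at most $M$ gives, in general, a polynomial of $q$-degree up to $2M$ (here $q$-degree $4$), not $M$. A correct self-contained argument has to aim at $2M$ and treat the cases you pass over:
\begin{itemize}
\item planes that are not graphs over the first two coordinates; there one gets a single equation of $q$-degree at most $M$ in one variable, and proving it nonzero is one place where the no-root hypothesis on $P$ enters;
\item the fibres of the elimination map;
\item the degenerate case where the two equations are left-dependent in the skew-polynomial ring, so the eliminant vanishes identically. The hypotheses of the theorem must be used to rule this out.
\end{itemize}
As written, Steps~2--3 do not prove \eqref{eq:BMNV-d2}; only your fallback citation does. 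Step~1 is just a sketch of \cite[Theorem~3.2]{BMNV}, though your treatment of the degenerate subcases $x=0$ and $y=0$ there is correct.
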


Choose an $s$-dimensional $\F_q$-subspace $B\leq F$ and bases
$$
 F=\langle\xi_1,\ldots,\xi_m\rangle_{\F_q},
 \qquad
 B=\langle b_1,\ldots,b_s\rangle_{\F_q},
$$
and define columns in $F^4$ by
\begin{align}
 c_i&=(\xi_i,0,\xi_i^{q^I},\xi_i^{q^J})^\top,
 \nonumber\\
 d_i&=(0,\xi_i,\alpha\xi_i^{q^J},
              \beta\xi_i^{q^I}+\gamma\xi_i^{q^J})^\top,
 \label{eq:BMNV-columns}\\
 z_j&=(0,0,b_j,0)^\top.\nonumber
\end{align}
Let
\begin{equation}\label{eq:BMNV-H}
 H_0=[\,c_1\ \cdots\ c_m\ d_1\ \cdots\ d_m\,],
 \qquad Z_B=[\,z_1\ \cdots\ z_s\,],
 \qquad H_B=[\,H_0\mid Z_B\,].
\end{equation}
If $\sum_j\lambda_jz_j\in W_{\alpha,\beta,\gamma}^{I,J}$ with
$\lambda_j\in\F_q$, then its first two coordinates force $x=y=0$.
Hence $\sum_j\lambda_jb_j=0$, and therefore every $\lambda_j$ is
zero.  Thus $\dim_{\F_q}\overline E_{Z_B}=s$, so
\eqref{eq:independence-condition} holds.  Choose $\Phi_B$ satisfying
$-H_0\Phi_B^\top=Z_B$ and an invertible four-column minor $A$
of $H_0$, move it first, write $H_B=[A\ Q]$, and put
\begin{equation}\label{eq:BMNV-G}
 G_B=[\,-Q^\top A^{-\top}\;I_{2m+s-4}\,].
\end{equation}
This is a basis matrix of the dual graph system $U_{\varphi_B}$.

\begin{theorem}\label{thm:main-BMNV}
Assume the hypotheses of Theorem~\ref{thm:BMNV} and let
$$
 1\leq s<\frac{m-2}{2}.
$$
Then the columns of $G_B$ in \eqref{eq:BMNV-G} span an extremal
$(m-3)$-scattered
\begin{equation}\label{eq:BMNV-output-parameters}
 [2m+s,\,2m+s-4]_{q^m/q}
\end{equation}
$q$-system.  Put $D_B:=\operatorname{rowsp}_F(H_B)$.  This is the
dual of the code associated with the constructed $q$-system, and it
is a nondegenerate quasi-MRD code with parameters
$$
 [2m+s,4,m-1]_{q^m/q}.
$$
The code associated with the constructed system is a nondegenerate
$[2m+s,2m+s-4,2]_{q^m/q}$ quasi-MRD code.
If $M:=\max\{I,J\}\leq(m-1)/2$, then
$$
 d_2(D_B)\geq2(m-M)\geq m+1.
$$
If, in addition,
\begin{equation}\label{eq:BMNV-all-block-separation}
 2(m-M)>m+\left\lfloor\frac{s}{2}\right\rfloor,
\end{equation}
then the system is inequivalent to every $q$-system associated with
$D_{(s_1,s_2)}^\perp$, where $s_1+s_2=s$ and
$D_{(s_1,s_2)}$ is an order-one direct-sum code.  Under the same
hypothesis, $D_B$ is not rank-metric equivalent to any nontrivial
direct sum.
\end{theorem}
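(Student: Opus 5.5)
The proof is a specialization of Theorem~\ref{thm:graph} and the two separation propositions to $W=W_{\alpha,\beta,\gamma}^{I,J}$ with $\ell=2$, so I will verify that the construction in \eqref{eq:BMNV-H}--\eqref{eq:BMNV-G} fits the framework of Section~\ref{sec:stabilization}.

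First, by Theorem~\ref{thm:BMNV}, $W$ is maximum scattered of rank $2m$ in $F^4$. The columns $c_i,d_i$ of $H_0$ are the images of the $\F_q$-basis $\{(\xi_i,0)\}\cup\{(0,\xi_i)\}$ of $F^2$ under the $\F_q$-linear parametrization $(x,y)\mapsto(x,y,x^{q^I}+\alpha y^{q^J},x^{q^J}+\beta y^{q^I}+\gamma y^{q^J})$. This map is injective, since its first two coordinates are $(x,y)$, so these columns form an ordered $\F_q$-basis of $W$. Hence $D_0=\operatorname{rowsp}_F(H_0)$ is an associated code with parameters $[2m,4,m-1]$ by \eqref{eq:initial-code-parameters}. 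The text before the statement already checks \eqref{eq:independence-condition} for $Z_B$.

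Second, I identify $G_B$ with a graph basis matrix. Since $H_B=[A\ Q]$ (after a column permutation) has full row rank $4$, the matrix $[-Q^\top A^{-\top}\ I]$ has rows orthogonal to those of $H_B$ and rank $2m+s-4$, so it generates $D_B^\perp$. With $\Phi_B$ solving $-H_0\Phi_B^\top=Z_B$, \eqref{eq:graph-dual-code} gives $D_B=D_{\Phi_B}$. Therefore $\operatorname{rowsp}(G_B)=C_{\Phi_B}$ up to the fixed column permutation. By the code--system correspondence, the column span of $G_B$ is equivalent to $U_{\varphi_B}$; more precisely, it is the column system of a generator matrix of $C_{\Phi_B}$, and column permutations do not change the column span. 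Theorem~\ref{thm:graph} with $\ell=2$ then gives at once: extremality and $(m-3)$-scatteredness with parameters \eqref{eq:BMNV-output-parameters}, the quasi-MRD parameters of $D_B$ and of its dual, and nondegeneracy of both.

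Third, the weight bound is \eqref{eq:graph-weight-monotonicity} combined with \eqref{eq:BMNV-d2}: $d_2(D_B)\geq d_2(D_0)\geq2(m-M)\geq m+1$. Under \eqref{eq:BMNV-all-block-separation}, $d_2(D_0)>m+\lfloor s/2\rfloor$, which is exactly \eqref{eq:separation-criterion} for $\ell=2$. Proposition~\ref{prop:direct-sum-separation} then excludes every $q$-system associated with $D_{(s_1,s_2)}^\perp$. For the final claim, $D_B$ is a nondegenerate $[2m+s,4,m-1]$ code with $d_2(D_B)>m+\lfloor s/2\rfloor$, so Proposition~\ref{prop:all-decompositions-order-two} excludes every nontrivial direct sum.

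The only step needing care is the identification in the second paragraph. The minor $A$ must be taken with respect to the same column ordering used in $H_B$. The permutation is an element of $\GL(2m+s,q)$, so Lemma~\ref{lem:equivalence-duality} shows that neither the code parameters nor the equivalence class of the column system are affected. The remaining steps are direct citations.
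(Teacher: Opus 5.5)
Your proposal is correct and follows the paper's proof. Like the paper, you specialize Theorem~\ref{thm:graph} to $\ell=2$, combine \eqref{eq:graph-weight-monotonicity} with \eqref{eq:BMNV-d2}, and then invoke Propositions~\ref{prop:direct-sum-separation} and~\ref{prop:all-decompositions-order-two}; the paper simply asserts before the theorem what you verify explicitly, namely that the columns of $H_0$ form an $\F_q$-basis of $W$ and that $G_B$ generates $C_{\Phi_B}$ up to the column permutation, so its column span is equivalent to $U_{\varphi_B}$.
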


\begin{proof}
By Theorem~\ref{thm:BMNV},
$W_{\alpha,\beta,\gamma}^{I,J}$ is maximum scattered of rank $2m$
in $F^4$, and the preceding argument verifies
\eqref{eq:independence-condition}.  Theorem~\ref{thm:graph}
therefore gives
$$
 n=2m+s,
 \qquad k=2(m-2)+s=2m+s-4,
$$
together with the claimed scatteredness and code parameters.

Let $D_0$ be the code associated with
$W_{\alpha,\beta,\gamma}^{I,J}$.  By
\eqref{eq:graph-weight-monotonicity} and \eqref{eq:BMNV-d2},
$$
 d_2(D_B)\geq d_2(D_0)\geq2(m-M).
$$
Finally, for $\ell=2$ the right-hand side of
\eqref{eq:separation-criterion} is
$m+\lfloor s/2\rfloor$.  Thus
\eqref{eq:BMNV-all-block-separation} is precisely the hypothesis
needed in Proposition~\ref{prop:direct-sum-separation}.
The same strict inequality and
Proposition~\ref{prop:all-decompositions-order-two} prove the final
assertion.
\end{proof}

\begin{corollary}\label{cor:BMNV-threshold}
Let $m\geq5$ be odd and put $s=(m-3)/2$.  Under the hypotheses of
Theorem~\ref{thm:BMNV}, the matrix $G_B$ defines an extremal
$(m-3)$-scattered
$$
 \left[\frac{5m-3}{2},\frac{5m-11}{2}\right]_{q^m/q}
$$
$q$-system.  If $M=\max\{I,J\}\leq(m-1)/2$, the code
$D_B=\operatorname{rowsp}_F(H_B)$ has $d_2(D_B)\geq2(m-M)$.  If
$$
 2(m-M)>m+\left\lfloor\frac{m-3}{4}\right\rfloor,
$$
then the constructed system is inequivalent to every $q$-system associated with
$D_{(s_1,s_2)}^\perp$, where $s_1+s_2=(m-3)/2$ and
$D_{(s_1,s_2)}$ is an order-one direct-sum code.  Under the same
hypothesis, the code $D_B$ is not rank-metric equivalent to any
nontrivial direct sum.
\end{corollary}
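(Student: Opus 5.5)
This corollary is the specialization $s=(m-3)/2$ of Theorem~\ref{thm:main-BMNV}, so the plan is to check that this value of $s$ is admissible and then read off each conclusion. Since $m\geq5$ is odd, $s=(m-3)/2$ is an integer with $s\geq1$. Moreover $2s=m-3<m-2$, so $1\leq s<(m-2)/2$ and Theorem~\ref{thm:main-BMNV} applies, given the hypotheses of Theorem~\ref{thm:BMNV}.

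Next I substitute $s$ into the parameters \eqref{eq:BMNV-output-parameters}. This gives
$$
 2m+s=\frac{4m+m-3}{2}=\frac{5m-3}{2},
 \qquad
 2m+s-4=\frac{5m-11}{2}.
$$
Extremality and $(m-3)$-scatteredness of the column system of $G_B$ are then exactly the first assertion of Theorem~\ref{thm:main-BMNV}. One point deserves a remark, since the specialized corollary treats it separately: this $s$ is the boundary case of Corollary~\ref{cor:specialized-extremal-gap}, where $L=M$. That only explains extremality for maximal systems and does not affect our argument, because extremality here comes directly from \eqref{eq:graph-extremal-rank}.

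The bound $d_2(D_B)\geq2(m-M)$ under $M\leq(m-1)/2$ is likewise quoted directly from Theorem~\ref{thm:main-BMNV}.

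For the separation statements, I compute $\lfloor s/2\rfloor=\lfloor(m-3)/4\rfloor$. With this, the displayed hypothesis is literally \eqref{eq:BMNV-all-block-separation}, and the two final conclusions follow from the last part of Theorem~\ref{thm:main-BMNV}. That part rests on Propositions~\ref{prop:direct-sum-separation} and~\ref{prop:all-decompositions-order-two} with $\ell=2$. I see no real obstacle; the only care needed is the integrality and range check for $s$ and the arithmetic identity for $\lfloor s/2\rfloor$.
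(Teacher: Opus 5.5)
Your proposal is correct and follows the paper's proof essentially verbatim: you check that $s=(m-3)/2$ satisfies $1\leq s<(m-2)/2$, substitute it into the parameters of Theorem~\ref{thm:main-BMNV}, and use $\lfloor s/2\rfloor=\lfloor(m-3)/4\rfloor$ to identify the displayed hypothesis with \eqref{eq:BMNV-all-block-separation}. Your side remark on the boundary case of Corollary~\ref{cor:specialized-extremal-gap} is accurate and does not affect the argument, since extremality comes directly from \eqref{eq:graph-extremal-rank}.
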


\begin{proof}
For odd $m\geq5$, the integer $s=(m-3)/2$ satisfies
$1\leq s<(m-2)/2$.  Substitution in
\eqref{eq:BMNV-output-parameters} gives
$$
 2m+s=\frac{5m-3}{2},
 \qquad
 2m+s-4=\frac{5m-11}{2}.
$$
Moreover,
$\lfloor s/2\rfloor=\lfloor(m-3)/4\rfloor$, so both conclusions
under the displayed inequality follow from
Theorem~\ref{thm:main-BMNV} with this value of $s$.
\end{proof}

The hypotheses are nonempty in an infinite range.  One convenient
specialization follows from \cite[Remark~3.3]{BMNV}.  Take
$I=1$, $J=2$, and choose $\alpha,\beta,\gamma\in\F_q^*$ so that
$$
 X^2+\gamma X-\alpha\beta
$$
is primitive over $\F_q$.  If $(q+1)\nmid m$, then
$P_{\alpha,\beta,\gamma}^{1,2}$ has no root in $\F_{q^m}$.
Consequently Corollary~\ref{cor:BMNV-threshold} gives infinitely many
extension degrees $m$ for every fixed $q$.  Here $M=2$, and the
strict generalized-weight inequality in that corollary holds for
every odd $m\geq5$, because
$$
 2(m-2)-m-\left\lfloor\frac{m-3}{4}\right\rfloor
 \geq\frac{3m-13}{4}>0.
$$

\subsection{A fully explicit binary example}

Let $q=2$, $m=5$, and
$$
 F=\F_{32}=\F_2[X]/(X^5+X^2+1),
 \qquad \omega=X\bmod (X^5+X^2+1).
$$
The polynomial $X^5+X^2+1$ is irreducible over $\F_2$: it has no
root in $\F_2$, and its remainder modulo the unique irreducible
quadratic $X^2+X+1$ is $1$.  A reducible polynomial of degree five
without a linear factor would have an irreducible quadratic factor.
Thus $\omega^5=\omega^2+1$.
Take $I=1$, $J=2$, $\alpha=\beta=\gamma=1$, and $B=\F_2$.
The polynomial in \eqref{eq:BMNV-P} is
$$
 P(X)=X^3+X+1.
$$
It has no root in $\F_{32}$: it is irreducible of degree three over
$\F_2$, while $\F_8$ is not a subfield of $\F_{32}$ because
$3\nmid5$.  Thus the corresponding $W_{1,1,1}^{1,2}$ is maximum
scattered.  For $B=\F_2$, the appended column
$z=(0,0,1,0)^\top$ does not belong to
$W_{1,1,1}^{1,2}$, because its first two coordinates would force
$x=y=0$.  Hence $H_B=[H_0\mid z]$
satisfies \eqref{eq:independence-condition}.

Use the power basis $\mathcal B=(1,\omega,\ldots,\omega^4)$ in
\eqref{eq:BMNV-columns}, and order the resulting columns as
$$
 c_1,c_2,c_3,d_1,c_4,c_5,d_2,d_3,d_4,d_5,z.
$$
Magma computations over $F$ show that the first four columns form the
invertible minor
$$
 A=[\,c_1\ c_2\ c_3\ d_1\,]
 =\begin{pmatrix}
 1&\omega&\omega^2&0\\
 0&0&0&1\\
 1&\omega^2&\omega^4&1\\
 1&\omega^4&\omega^8&0
\end{pmatrix}.
$$
The same computations show that the systematic matrix in
\eqref{eq:BMNV-G} is
\begin{equation}\label{eq:binary-G}
 G=[\,R\;I_7\,],
\end{equation}
where the entries are displayed in the basis $\mathcal B$,
{\small
$$
R=\begin{pmatrix}
\omega^4+\omega^2 & \omega^2+\omega+1 &
 \omega^4+\omega^2+\omega & 0\\
\omega^4 & \omega^4+\omega^3+1 &
 \omega^4+\omega^3+\omega^2+\omega & 0\\
\omega^3+\omega & \omega^2 & \omega^4+\omega & \omega\\
\omega^3+\omega & \omega^3+\omega^2+\omega+1 &
 \omega^2+1 & \omega^2\\
\omega^4+\omega^3+\omega+1 & \omega^4 &
 \omega^4+\omega^2+1 & \omega^3\\
\omega^4+\omega+1 & \omega^4 &
 \omega^4+\omega^2+\omega+1 & \omega^4\\
\omega^4+1 & \omega^4 & \omega^2+1 & 0
\end{pmatrix}.
$$
}

\begin{corollary}\label{cor:binary}
The $\F_2$-span of the eleven columns of the matrix $G$ in
\eqref{eq:binary-G} is an extremal $2$-scattered
$[11,7]_{2^5/2}$ $q$-system.  Let $C=\operatorname{rowsp}_F(G)$.
Then $C$ is a nondegenerate $[11,7,2]_{2^5/2}$ quasi-MRD code and
$$
 D:=C^\perp=\operatorname{rowsp}_F(H_B)
$$
is a nondegenerate $[11,4,4]_{2^5/2}$ quasi-MRD code, and
$$
 d_2(D)=6.
$$
Its complete generalized rank-weight hierarchy is
$$
 (d_1(D),d_2(D),d_3(D),d_4(D))=(4,6,9,11).
$$
Consequently, the constructed $q$-system is inequivalent to every
$q$-system associated with $D_{(s_1,s_2)}^\perp$, where
$s_1+s_2=1$ and $D_{(s_1,s_2)}$ is an order-one direct-sum code.
Furthermore, $D$ is not rank-metric equivalent to any nontrivial
direct sum.
\end{corollary}

\begin{proof}
Magma computations give
$\det(A)=\omega^4+\omega^3+1\neq0$, and the definition
$R=-Q^\top A^{-\top}$ gives $GH_B^\top=0$.  Since $H_B$ has row rank
four and $G$ has row rank seven,
$\operatorname{rowsp}_F(G)=D^\perp$.  Hence
Theorem~\ref{thm:main-BMNV} applies.  Here
$m-3=2$, $s=1$, and
$$
 \left\lfloor\frac{7\cdot5}{3}\right\rfloor=11.
$$
Finally, \eqref{eq:BMNV-d2} gives $d_2(D)\geq2(5-2)=6$.
For the reverse inequality, let $r_1,\ldots,r_4$ be the rows of
$H_B$ and set
$$
 v_1=r_1+r_3+r_4,
 \qquad v_2=r_2+r_3.
$$
Their coefficient vectors are independent, so they span a
two-dimensional subcode of $D$.  Expanding $v_1,v_2$ in the basis
$\mathcal B$, Magma gives rank six for the resulting binary
coefficient matrix.  This rank is precisely
$\dim_{\F_2}\operatorname{Rsupp}_2
(\langle v_1,v_2\rangle_F)$.
Thus $d_2(D)\leq6$, and hence $d_2(D)=6$.
An exhaustive Magma enumeration of the one-, two-, and
three-dimensional $F$-subcodes gives
$(d_1(D),d_2(D),d_3(D))=(4,6,9)$; nondegeneracy gives
$d_4(D)=11$.
Finally, Proposition~\ref{prop:direct-sum-separation} gives
$d_2(D_{(s_1,s_2)})=5$ whenever $s_1+s_2=1$, proving the last
inequivalence assertion.  Since $d_2(D)=6>5$, Proposition~\ref{prop:all-decompositions-order-two} also excludes every
nontrivial direct-sum decomposition of $D$.
\end{proof}

\section{Extensions in higher order}\label{sec:higher-order}

The extension theorem is not restricted to order two.  We record its
direct application to the higher-order scattered sequences of
\cite{BGM}.  The conclusion below separates the outputs from the
specified order-one direct-sum class; it does not assert
indecomposability against arbitrary dimension splits.

In the notation of \cite{BGM}, the order denoted there by $m$ is our
$\ell$, while its extension degree $n$ is our $m$; the symbols
$A,I,J$, and $K=J-I$ are unchanged.
Let $\ell\geq3$, let $A=(\alpha_1,\ldots,\alpha_\ell)\in(F^*)^\ell$,
and let $0<I<J<m$ with $\gcd(I,J)=1$.  Put
$$
 K=J-I,
 \qquad
 C_{a,r}=\frac{q^{ar}-1}{q^a-1}.
$$
Read subscripts cyclically modulo $\ell$, so that, for example,
$\alpha_{\ell+1}=\alpha_1$.  Define
\begin{align*}
 f_i(x_1,\ldots,x_\ell)
 &=x_i^{q^I}+\alpha_{i+1}x_{i+1}^{q^J}
 \quad(1\leq i<\ell),\\
 f_\ell(x_1,\ldots,x_\ell)
 &=x_\ell^{q^I}+\alpha_1x_1^{q^J},
\end{align*}
and
\begin{equation}\label{eq:BGM-system}
 \begin{split}
 W_A^{I,J}=\{&(x_1,\ldots,x_\ell,
                   f_1(x),\ldots,f_\ell(x)):\\
              &x_1,\ldots,x_\ell\in F\}\leq F^{2\ell}.
 \end{split}
\end{equation}
Define
$$
 \kappa_A^{I,J}:=
 \frac{\alpha_1^{q^{(\ell-2)K}}
       \displaystyle\prod_{u=3}^{\ell}
       \alpha_u^{q^{(u-3)K}}}
      {\alpha_2^{C_{K,\ell-1}}}
$$
and, for cyclic indices, put
$$
 \Pi_i:=
 \alpha_i^{q^{(\ell-1)K}}
 \alpha_{i-1}^{q^{(\ell-2)K}}\cdots
 \alpha_{i+2}^{q^K}\alpha_{i+1}.
$$
For a positive integer $e$, an element $c\in F^*$ is called an
$e$-th power in $F$ if $c=v^e$ for some $v\in F^*$.
By \cite[Theorem~2.4]{BGM}, the space $W_A^{I,J}$ is maximum
scattered whenever $\kappa_A^{I,J}$ is not a
$C_{K,\ell}$-th power in $F$.  If, in addition, there exists
$\delta\in\{1,\ldots,\ell-1\}$ such that
\begin{equation}\label{eq:BGM-power-condition}
 \frac{\Pi_{\delta+2}}{\Pi_2}
 \quad\text{is not a $(q^{\ell K}-1)$-st power in $F$},
\end{equation}
then \cite[Theorem~2.15 and Remark~2.17]{BGM} show that an associated
MRD code $D_0$ satisfies
\begin{equation}\label{eq:BGM-d2}
 d_2(D_0)\geq2m-2J.
\end{equation}
In particular, if $m\geq2J+1$, then $d_2(D_0)>m$.

Let $H_0$ be a basis matrix of $W_A^{I,J}$.  Choose an
$s$-dimensional $\F_q$-subspace $B\leq F$ with basis
$b_1,\ldots,b_s$, and let $Z_B$ have columns
$(0,\ldots,0,b_j,0,\ldots,0)^\top$, with $b_j$ in the position of
$f_1$.  If an $\F_q$-linear combination of these columns belongs to
$W_A^{I,J}$, its first $\ell$ coordinates force
$x_1=\cdots=x_\ell=0$; the independence of the $b_j$ then forces all
coefficients to vanish.  Hence $\dim_{\F_q}\overline E_{Z_B}=s$, and
$$
 H_{A,B}=[\,H_0\mid Z_B\,]
$$
satisfies \eqref{eq:independence-condition}.  Let $G_{A,B}$ be any
generator matrix of $\operatorname{rowsp}_F(H_{A,B})^\perp$.

\begin{corollary}\label{cor:higher-order}
Suppose $\kappa_A^{I,J}$ is not a $C_{K,\ell}$-th power in $F$ and
that \eqref{eq:BGM-power-condition} holds.  Let
$1\leq s<(m-2)/2$.  The columns of $G_{A,B}$ span an extremal
$(m-3)$-scattered
$$
 [\ell m+s,\ell(m-2)+s]_{q^m/q}
$$
$q$-system.  Put $D:=\operatorname{rowsp}_F(H_{A,B})$.  Then $D$ is
the dual of the code
associated with the constructed $q$-system and satisfies
$$
 d_2(D)\geq2m-2J.
$$
If
\begin{equation}\label{eq:BGM-direct-sum-separation}
 m-2J>\left\lfloor\frac{s}{\ell}\right\rfloor,
\end{equation}
then the system is inequivalent to every $q$-system associated with
$D_{\mathbf{s}}^\perp$, where $\mathbf{s}$ has $\ell$ entries with
sum $s$ and $D_{\mathbf{s}}$ is an order-one direct-sum code.
\end{corollary}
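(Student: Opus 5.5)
The plan is to obtain Corollary~\ref{cor:higher-order} by specializing Theorem~\ref{thm:graph} and Proposition~\ref{prop:direct-sum-separation}, in the same way Theorem~\ref{thm:main-BMNV} was obtained for order two. There are three steps.

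First, the hypothesis on $\kappa_A^{I,J}$ and \cite[Theorem~2.4]{BGM} make $W=W_A^{I,J}$ a maximum scattered $q$-system of rank $\ell m$ in $F^{2\ell}$. It can therefore serve as the initial system of Section~\ref{sec:stabilization}, with $H_0$ its basis matrix and $D_0=\operatorname{rowsp}_F(H_0)$.

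Second, I produce the graph map. The discussion preceding the corollary shows that $Z_B$ satisfies \eqref{eq:independence-condition}. Since $H_0$ has full row rank, there is $\Phi$ with $-H_0\Phi^\top=Z_B$, and then $H_{A,B}=H_\Phi$ by \eqref{eq:graph-dual-code}. Consequently $D=\operatorname{rowsp}_F(H_{A,B})=D_\Phi$, and $D^\perp=\operatorname{rowsp}_F(G_\Phi)$. Any generator matrix $G_{A,B}$ of $D^\perp$ differs from $G_\Phi$ by left multiplication by an element of $\GL(\ell(m-2)+s,F)$. Its column system is therefore equivalent to $U_\varphi$; more precisely, it is the image of $U_\varphi$ under an $F$-linear map. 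Scatteredness, rank and extremality are invariant under such maps, so Theorem~\ref{thm:graph} gives an extremal $(m-3)$-scattered $[\ell m+s,\ell(m-2)+s]_{q^m/q}$ $q$-system, with $D$ the dual of its associated code.

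Third, I prove the weight bound and the separation. By \eqref{eq:graph-weight-monotonicity} and \eqref{eq:BGM-d2},
$$
d_2(D)\geq d_2(D_0)\geq 2m-2J.
$$
Condition \eqref{eq:BGM-direct-sum-separation} rearranges to
$$
2m-2J>m+\left\lfloor\frac{s}{\ell}\right\rfloor,
$$
so $d_2(D_0)$ satisfies \eqref{eq:separation-criterion}. Since $\ell\geq3\geq2$, Proposition~\ref{prop:direct-sum-separation} applies and yields inequivalence to every $q$-system associated with $D_{\mathbf s}^\perp$.

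The main obstacle is justifying that \eqref{eq:BGM-d2} applies to our particular $H_0$ rather than to some other associated code. This is handled by noting that generalized rank weights are invariant under code equivalence (Lemma~\ref{lem:equivalence-duality}), and that any two associated codes of $W$ are equivalent by the code--system correspondence. A secondary check is the translation of notation from \cite{BGM}, so that their bound $2n-2J$ becomes our $2m-2J$.
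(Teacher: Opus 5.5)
Your proposal is correct and follows the paper's proof. You show that $W_A^{I,J}$ is maximum scattered and that $Z_B$ satisfies \eqref{eq:independence-condition}, so Theorem~\ref{thm:graph} gives the parameters; you combine \eqref{eq:graph-weight-monotonicity} with \eqref{eq:BGM-d2}; and you rearrange \eqref{eq:BGM-direct-sum-separation} into \eqref{eq:separation-criterion} so that Proposition~\ref{prop:direct-sum-separation} applies. Your extra checks, that $G_{A,B}$ differs from $G_\Phi$ by an element of $\GL(\ell(m-2)+s,F)$ and that $d_2(D_0)$ does not depend on the chosen basis matrix $H_0$, only make explicit what the paper leaves implicit.
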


\begin{proof}
The hypothesis on $\kappa_A^{I,J}$ makes $W_A^{I,J}$ maximum
scattered, and the preceding argument verifies
\eqref{eq:independence-condition}.  Theorem~\ref{thm:graph} yields
the stated parameters.  Moreover,
\eqref{eq:graph-weight-monotonicity} and \eqref{eq:BGM-d2} give
$$
 d_2(D)\geq d_2(D_0)\geq2m-2J.
$$
Finally, \eqref{eq:BGM-direct-sum-separation} is equivalent to
$$
 2m-2J>m+\left\lfloor\frac{s}{\ell}\right\rfloor.
$$
Thus Proposition~\ref{prop:direct-sum-separation} proves the
inequivalence assertion.
\end{proof}

\begin{corollary}\label{cor:higher-order-infinite}
Let $q$ be any prime power, let $\ell\geq3$ and $t\geq2$, and put
$m=\ell t$, $I=1$, and $J=2$.
There exists $A=(\alpha_1,\ldots,\alpha_\ell)\in(F^*)^\ell$ such
that, for every $1\leq s<(m-2)/2$ and every $s$-dimensional
$\F_q$-subspace $B\leq F$, Corollary~\ref{cor:higher-order} gives an extremal
$(m-3)$-scattered
$$
 [\ell m+s,\ell(m-2)+s]_{q^m/q}
$$
$q$-system whose dual code $D$ satisfies $d_2(D)\geq2m-4$ and is not
equivalent to any order-one direct-sum code with total surplus $s$.
\end{corollary}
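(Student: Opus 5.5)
With $I=1$, $J=2$ we have $K=1$, so everything reduces to Corollary~\ref{cor:higher-order}. It suffices to find one tuple $A\in(F^*)^\ell$, independent of $s$ and $B$, such that
\begin{itemize}
\item[(i)] $\kappa_A^{1,2}$ is not a $C_{1,\ell}$-th power in $F$, and
\item[(ii)] \eqref{eq:BGM-power-condition} holds for some $\delta$.
\end{itemize}
Here $\gcd(I,J)=1$ holds automatically.

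Given such an $A$, Corollary~\ref{cor:higher-order} yields the extremal system with the stated parameters and $d_2(D)\geq 2m-4$ for every $s$ and every $B$. Separation needs \eqref{eq:BGM-direct-sum-separation}, namely $m-4>\lfloor s/\ell\rfloor$. Since $s<(m-2)/2$ and $\ell\geq3$, we have $\lfloor s/\ell\rfloor\leq s/3<m/6$. Then $m-4>m/6$ whenever $m>24/5$, which holds because $m=\ell t\geq6$.

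For the choice of $A$, I would take $\alpha_2=\cdots=\alpha_\ell=1$ and leave $\alpha_1=\alpha$ free. Then $\kappa_A=\alpha^{q^{\ell-2}}$. Raising to the power $q^{\ell-2}$ is an automorphism of $F^*$ that permutes $e$-th powers, so $\kappa_A$ is a $C_{1,\ell}$-th power iff $\alpha$ is one.

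Here $C_{1,\ell}=(q^\ell-1)/(q-1)$. Because $\ell\mid m$, we get $q^\ell-1\mid q^m-1$, so $C_{1,\ell}$ divides $q^m-1=|F^*|$. Since $\ell\geq3$, $C_{1,\ell}\geq q^2+q+1>1$. So the $C_{1,\ell}$-th powers form a proper subgroup of index $C_{1,\ell}$, and any generator $\alpha$ of $F^*$ satisfies (i).

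For (ii) with this $A$, the product $\Pi_i$ is the product over all cyclic indices, each $\alpha_j$ raised to some power $q^{c}$ with $c\in\{0,\ldots,\ell-1\}$. Only $\alpha_1$ contributes, with exponent $q^{c_i}$ where $c_i\equiv i-1\pmod\ell$.
\begin{itemize}
\item[] From $\Pi_i=\alpha_i^{q^{\ell-1}}\alpha_{i-1}^{q^{\ell-2}}\cdots$, the index $j$ carries exponent $q^{\ell-1-(i-j)}$, so $\alpha_1$ carries $q^{(\ell-i)\bmod \ell}$ after cyclic adjustment.
\item[] Hence $\Pi_{\delta+2}/\Pi_2=\alpha^{q^{a}-q^{b}}$ with $a\neq b$ in $\{0,\ldots,\ell-1\}$.
\end{itemize}
The condition to verify is that $\alpha^{q^a-q^b}$ is not a $(q^\ell-1)$-st power. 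Since $q^\ell-1\mid q^m-1$, this fails to be a $(q^\ell-1)$-st power iff $(q^\ell-1)\nmid(q^a-q^b)$, for $\alpha$ a generator. With $0\le b<a\le\ell-1$ (or the reverse), $q^a-q^b=q^b(q^{a-b}-1)$. This is nonzero, coprime to $q$ in the relevant factor, and $0<q^{a-b}-1<q^\ell-1$, so the divisibility fails. Any $\delta\in\{1,\ldots,\ell-1\}$ then works, for instance $\delta=1$.

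The main obstacle is the exponent bookkeeping in $\Pi_i$ with cyclic indices. I would recheck it carefully for $\ell=3$, but the argument only needs the two exponents to be distinct powers $q^a\neq q^b$ with $a,b<\ell$. That follows because $i\mapsto$ (position of $\alpha_1$ in $\Pi_i$) is injective modulo $\ell$. If the exact exponent pattern differs, the same generator argument still applies.
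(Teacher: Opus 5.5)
Your proof is correct, but you establish the existence of $A$ by a different route from the paper. The paper fixes $\alpha_2,\ldots,\alpha_\ell$ arbitrarily and invokes the counting in the proof of \cite[Proposition~2.13]{BGM}. That count shows the bad sets $\mathcal B_0,\mathcal B_1,\ldots,\mathcal B_{\ell-1}$ of values of $\alpha_1$ together contain fewer than $q^m-1$ elements, so some $\alpha_1$ avoids them all.

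You instead write down an explicit tuple, $\alpha_2=\cdots=\alpha_\ell=1$ with $\alpha_1$ a primitive element of $F$. You then check both conditions directly, using only that $F^*$ is cyclic and that $\ell\mid m$ gives $q^\ell-1\mid q^m-1$. Your exponent bookkeeping is right: $\alpha_1$ enters $\Pi_i$ with exponent $q^{(\ell-i)\bmod\ell}$. For example, $\Pi_3/\Pi_2=\alpha_1^{q^{\ell-3}-q^{\ell-2}}$, and $q^\ell-1\nmid q^{\ell-3}(q-1)$. So every $\delta$ works, not just one.

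What each approach buys: yours is self-contained and constructive. It also shows exactly where the hypothesis $m=\ell t$ enters, since it makes $C_{1,\ell}$ and $q^\ell-1$ divide $|F^*|$, so a primitive element fails both power conditions. The paper's argument depends on an external count but gives a genericity statement: for every choice of $\alpha_2,\ldots,\alpha_\ell$ a suitable $\alpha_1$ exists. The final check of \eqref{eq:BGM-direct-sum-separation} is essentially the same in both, via $\lfloor s/\ell\rfloor<(m-2)/6<m-4$ for $m\geq6$.
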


\begin{proof}
Here $K=1$ and $\ell\nmid K$.  For fixed
$\alpha_2,\ldots,\alpha_\ell$, let $\mathcal B_0$ be the choices of
$\alpha_1$ for which $\kappa_A^{1,2}$ is a $C_{1,\ell}$-th power,
and let $\mathcal B_\delta$ be those for which
$\Pi_{\delta+2}/\Pi_2$ is a $(q^\ell-1)$-st power
$(1\leq\delta<\ell)$.  The proof of
\cite[Proposition~2.13]{BGM} counts these bad sets after fixing
$\alpha_2,\ldots,\alpha_\ell$ and gives, uniformly,
$$
 \left|\mathcal B_0\cup
 \bigcup_{\delta=1}^{\ell-1}\mathcal B_\delta\right|<q^m-1.
$$
Thus one choice of $\alpha_1$ avoids all conditions simultaneously.
Corollary~\ref{cor:higher-order} gives
$d_2(D)\geq2m-4$; moreover,
$$
 \left\lfloor\frac{s}{\ell}\right\rfloor
 <\frac{m-2}{2\ell}
 \leq\frac{m-2}{6}<m-4.
$$
Thus \eqref{eq:BGM-direct-sum-separation} is automatic.
\end{proof}

We close with two natural problems.  The estimate in
Theorem~\ref{thm:orbit-count} does not use the stabilizer of $W$;
determining its orbits on the $s$-subspaces of $F^{2\ell}/W$ would
give a more precise count.  In higher order, it remains open to
exclude arbitrary dimension splits, rather than only the order-one
direct-sum class considered here; exact higher generalized rank
weights may provide the required distinction.
\section*{Acknowledgements}
The author thanks the Italian National Group for Algebraic and Geometric Structures and their Applications (GNSAGA—INdAM)
which supported the research. 

\section*{Declarations}
{\bf Conflicts of interest.} The authors have no conflicts of interest to declare that are relevant to the content of this
article.
\footnotesize
\bibliographystyle{plain}   
\bibliography{ref} 

@article{AMSZ,
  author  = {S. Adriaensen and J. Mannaert and P. Santonastaso and F. Zullo},
  title   = {Cones from maximum $h$-scattered linear sets and a stability result for cylinders from hyperovals},
  journal = {Discrete Mathematics},
  volume  = {346},
  year    = {2023},
  pages   = {113602},
}

@article{BGM,
  author  = {D. Bartoli and A. Giannoni and G. Marino},
  title   = {New scattered subspaces in higher dimensions},
  journal = {Annali di Matematica Pura ed Applicata},
  volume  = {204},
  year    = {2025},
  pages   = {815--834},
}

@article{BMNV,
  author  = {D. Bartoli and G. Marino and A. Neri and L. Vicino},
  title   = {Exceptional scattered sequences},
  journal = {Algebraic Combinatorics},
  volume  = {7},
  year    = {2024},
  pages   = {1405--1431},
}

@article{CMPZ,
  author  = {B. Csajb{\'o}k and G. Marino and O. Polverino and F. Zullo},
  title   = {Generalising the scattered property of subspaces},
  journal = {Combinatorica},
  volume  = {41},
  year    = {2021},
  pages   = {237--262},
}

@article{DGLR,
  author  = {J. de la Cruz and E. Gorla and H. H. L{\'o}pez and A. Ravagnani},
  title   = {Weight distribution of rank-metric codes},
  journal = {Designs, Codes and Cryptography},
  volume  = {86},
  year    = {2018},
  pages   = {1--16},
}

@article{Delsarte,
  author  = {P. Delsarte},
  title   = {Bilinear forms over a finite field, with applications to coding theory},
  journal = {Journal of Combinatorial Theory, Series A},
  volume  = {25},
  year    = {1978},
  pages   = {226--241},
}

@incollection{Ducoat,
  author    = {J. Ducoat},
  title     = {Generalized rank weights: a duality statement},
  editor    = {G. Kyureghyan and G. L. Mullen and A. Pott},
  booktitle = {Topics in Finite Fields},
  series    = {Contemporary Mathematics},
  volume    = {632},
  publisher = {American Mathematical Society},
  year      = {2015},
  pages     = {101--109},
}

@article{Gabidulin,
  author  = {E. M. Gabidulin},
  title   = {Theory of codes with maximum rank distance},
  journal = {Problemy Peredachi Informatsii},
  volume  = {21},
  year    = {1985},
  pages   = {3--16},
}

@article{KMU,
  author  = {J. Kurihara and R. Matsumoto and T. Uyematsu},
  title   = {Relative generalized rank weight of linear codes and its applications to network coding},
  journal = {IEEE Transactions on Information Theory},
  volume  = {61},
  year    = {2015},
  pages   = {3912--3936},
}

@article{MNT,
  author  = {G. Marino and A. Neri and R. Trombetti},
  title   = {Evasive subspaces, generalized rank weights and near {MRD} codes},
  journal = {Discrete Mathematics},
  volume  = {346},
  year    = {2023},
  pages   = {113605},
}

@article{Santonastaso,
  author  = {P. Santonastaso},
  title   = {Completely decomposable rank-metric codes},
  journal = {Linear Algebra and its Applications},
  volume  = {726},
  year    = {2025},
  pages   = {371--404},
}

@misc{BBMS,
  author        = {D. Bartoli and M. Borello and G. Marino and M. Scotti},
  title         = {Linear rank-metric intersecting codes},
  year          = {2025},
  eprint        = {2507.00569},
  archivePrefix = {arXiv},
  note          = {arXiv:2507.00569},
}

@misc{BGMN,
  author        = {D. Bartoli and A. Giannoni and G. Marino and A. Neri},
  title         = {An infinite family of non-extendable {MRD} codes},
  year          = {2026},
  eprint        = {2603.27748},
  archivePrefix = {arXiv},
  note          = {arXiv:2603.27748},
}

@misc{BPZ,
  author        = {M. Borello and O. Polverino and F. Zullo},
  title         = {Delsarte duality on subspaces and applications to rank-metric codes and $q$-matroids},
  year          = {2025},
  eprint        = {2509.24409},
  archivePrefix = {arXiv},
  note          = {arXiv:2509.24409},
}

\end{document}